\theoremstyle{plain}
\newtheorem{thm}{\protect\theoremname}
  \theoremstyle{remark}
  \newtheorem{rem}[thm]{\protect\remarkname}
  \theoremstyle{remark}
  \newtheorem*{rem*}{\protect\remarkname}
  \theoremstyle{remark}
  \newtheorem*{not*}{Notations}
  \theoremstyle{plain}
  \newtheorem{lem}[thm]{\protect\lemmaname}
  \theoremstyle{plain}
  \newtheorem{prop}[thm]{\protect\propositionname}
  \theoremstyle{definition}
  \newtheorem{defn}[thm]{\protect\definitionname}
  \providecommand{\definitionname}{Definition}
  \providecommand{\lemmaname}{Lemma}
  \providecommand{\propositionname}{Proposition}
  \providecommand{\remarkname}{Remark}
\providecommand{\theoremname}{Theorem}
\begin{document}

\title[Blow-up phenomena for linearly perturbed Yamabe problem]{Blow-up phenomena for linearly perturbed Yamabe problem on manifolds
with umbilic boundary}

\author{Marco Ghimenti}
\address[Marco Ghimenti]{Dipartimento di Matematica
Universit\`a di Pisa
Largo Bruno Pontecorvo 5, I - 56127 Pisa, Italy}
\email{marco.ghimenti@unipi.it }

\author{Anna Maria Micheletti}
\address[Anna Maria Micheletti]{Dipartimento di Matematica
Universit\`a di Pisa
Largo Bruno Pontecorvo 5, I - 56127 Pisa, Italy}
\email{a.micheletti@dma.unipi.it }

\author{Angela Pistoia}
\address[Angela Pistoia] {Dipartimento SBAI, Universt\`{a} di Roma ``La Sapienza", via Antonio Scarpa 16, 00161 Roma, Italy}
\email{angela.pistoia@uniroma1.it}

\thanks{The first author is partially supported by P.R.A., University of Pisa}
\begin{abstract}
We build blowing-up solutions for linear perturbation of the Ya\-ma\-be problem on manifolds with umbilic boundary, provided the Weyl tensor is nonzero everywhere on the boundary and the dimension of the manifold is $n\ge11$.

 \end{abstract}

\keywords{Yamabe problem, manifold with umbilic boundary, linear perturbation,  bubbling phenomena, Weyl tensor}

\subjclass{35J60, 53C21}
\maketitle

\section{Introduction}

The well known Yamabe problem  consists of finding a constant scalar curvature metric  which is
pointwise conformal to a given metric $g$ on an $n$-dimensional ($n\ge3$)   compact Riemannian
manifold $M$ without boundary. From a PDE\rq{}s point of view,
this is equivalent to finding a positive solution to the semilinear
elliptic equation
\begin{equation}\label{y}
 L_{g}u=\kappa u^{n+2\over n-2}\ \text{ in }M 
\end{equation}
where $\kappa$ is a constant, $L_gu=-\Delta_gu +c(n)R_g u$ is the conformal Laplacian for $g$ with scalar
curvature $R_g$ and $c(n):={n-2\over 4(n-1)}$. Indeed,  if $u$ is a positive solution of  \eqref{y}, then the new metric
$\tilde g=u^{4\over n-2} g$
has scalar curvature $c(n) \kappa.$

This problem has been complete solved  through the combined
works of Yamabe \cite{yam}, Trudinger \cite{tru}, Aubin \cite{aub} and Schoen \cite{S}. The structure of the full set of solutions of \eqref{y} has also been completely understood. We quote the survey of  Brendle and Marques \cite{BM} for 
a complete overview on the   compactness and non-compactness results. 
A related issue is the compactness of {\em linear} or {\em non-linear perturbations} of problem \eqref{y}  which has been largely studied in the last few years with contributions by several authors (see \cite{D,DH,EP,EPV,MPV,MPVa,PV,RV}).
\bigskip

An obvious extension of such problems is to consider manifolds with boundary. The
Yamabe problem on manifolds with boundary was initially investigated  by Escobar  \cite{E1,E2}.
In this
case one would like to find  a  metric $g$ on an $n$-dimensional ($n\ge3$)   compact Riemannian
manifold $M$ with  boundary $\partial M$  which has not only constant
scalar curvature but constant mean curvature as well. This problem is equivalent to showing
the existence of a positive solution to the boundary value problem
\begin{equation}
\left\{ \begin{array}{cc}
L_{g}u=\kappa u^{n+2\over n-2} & \text{ in }M\\
\partial_{\nu_g}u+\frac{n-2}{2}h_{g}u=\mathfrak c u^{\frac{n}{n-2}} & \text{ on }\partial M
\end{array}\right.\label{esc}
\end{equation}
where ${\nu_g}$ is the unit outer normal and $h_{g}$  is the mean curvature. If such a solution exists, then the
metric $\tilde g=u^{4\over n-2} g$
has scalar curvature $c(n) \kappa$
 and the boundary has mean curvature $\mathfrak c.$
 Problem \eqref{esc} has been solved starting from 
  Escobar in \cite{E1,E2} with contributions from several authors when either $\kappa\not=0$ and $\mathfrak c=0$ or $\kappa=0$ and $\mathfrak c\not=0$ (see the recent paper by Disconzi and Khuri \cite{DK} for an exhaustive list of references)

  In this paper we will focus on the zero scalar curvature case, i.e. $\kappa=0,$ so problem \eqref{esc} reduces to finding a positive solution
 to the boundary value problem 
\begin{equation}
\left\{ \begin{array}{cc}
L_{g}u=0 & \text{ in }M\\
\partial_{\nu}u+\frac{n-2}{2}h_{g}u=\mathfrak c u^{\frac{n}{n-2}} & \text{ on }\partial M.
\end{array}\right.\label{eq:probK}
\end{equation}
 
Solutions to (\ref{eq:probK}) are critical points of the functional
\[
Q(u):=\frac{\int\limits _{M}\left(|\nabla u|^{2}+\frac{n-2}{4(n-1)}R_{g}u^{2}\right)dv_{g}+\int\limits _{\partial M}\frac{n-2}{2}h_{g}u^{2}d\sigma_{g}}{\left(\int\limits _{\partial M}|u|^{\frac{2(n-1)}{n-2}}d\sigma\right)^{\frac{n-2}{n-1}}},\ u\in H
\]
where $dv_{g}$ and $d\sigma_{g}$ denote the volume forms on $M$
and $\partial M,$ respectively, and the space 
\[
H:=\left\{ u\in H_{g}^{1}(M)\ :\ u\not=0\text{ on }\partial M\right\} .
\]
Escobar in \cite{E1} introduced the Sobolev quotient 
\begin{equation}
Q(M,\partial M):=\inf\limits _{H}Q(u),\label{sob-quo}
\end{equation}
which is conformally invariant and always satisfies 
\begin{equation}
Q(M,\partial M)\le Q(\mathbb{B}^{n},\partial\mathbb{B}^{n}),\label{ineq}
\end{equation}
where $\mathbb{B}^{n}$ is the unit ball  in $\mathbb{R}^{n}$ endowed
with the euclidean metric $\mathfrak{g}_{0}$.
Following Aubin's approach (see \cite{aub}), Escobar proved that
if $Q(M,\partial M)$ is finite and the strict inequality in (\ref{ineq})
holds, i.e. 
\begin{equation}
Q(M,\partial M)<Q(\mathbb{B}^{n},\partial\mathbb{B}^{n}),\label{strict}
\end{equation}
then the infimum (\ref{sob-quo}) is achieved and a solution to problem
(\ref{eq:probK}) does exist.
In the negative case, i.e. $Q(M,\partial M)\le0$, it is quite easy to prove that
(\ref{strict}) holds. The positive case, i.e. $Q(M,\partial M)>0$,
is the most difficult one and the proof of the validity of (\ref{strict})
 required a lot of work. When $(M,g)$ is not conformally equivalent
to $(\mathbb{B}^{n},\mathfrak{g}_{0})$, (\ref{strict}) has been
proved by Escobar in \cite{E1}, by Marques in \cite{M1,Ma2} and by
Almaraz in \cite{A1}. 

Once the existence of solutions of problem (\ref{eq:probK}) is settled,
a natural question concerns the structure of the full set of positive
solutions of (\ref{eq:probK}).
If $Q(M,\partial M)<0$ the solution is unique and if $Q(M,\partial M)=0$
the solution is unique up to a constant factor. If $Q(M,\partial M)>0$
the situation turns out to be more delicate. Indeed, the round hemisphere 
provides the canonical example of non compactness, while compactness was proved by Felli and Ould-Ahmedou in \cite{FO03}  when   $(M,g)$ is locally conformally
flat and $\partial M$ is umbilic and by  Almaraz in \cite{Al}, when $n\ge7$ and the trace-free second fundamental
form of $\partial M$ is non zero everywhere.
Up to our knowledge, the only non-compactness result is due to Almaraz 
in \cite{A2}, where he constructs a sequence of blowing-up conformal metrics
with zero scalar curvature and constant boundary mean curvature on
a ball of dimension $n\ge25.$ It is unknown if the dimension $25$
is sharp for the compactness, namely if $n\le24$ the problem (\ref{eq:probK})
is compact or not.

The compactness issue is closely related to the existence of blowing-up solutions for {\em small perturbations} of problem \eqref{eq:probK}. In particular, we consider the {\em linear perturbation  problem}
\begin{equation}
\left\{ \begin{array}{cc}
-\Delta_{g}v+\frac{n-2}{4(n-1)}R_{g}v=0 & \text{ in }M\\
\frac{\partial v}{\partial\nu}+\frac{n-2}{2}h_{g}v+\varepsilon\gamma v=(n-2)v^{\frac{n}{n-2}} & \text{ on }\partial M
\end{array}\right.\label{eq:P}
\end{equation}
 where $\varepsilon$ is a small positive parameter and $\gamma$ is a given smooth function,
 and we address the following question:
 \begin{itemize}
 \item[(Q)] {\em Does problem \eqref{eq:P} have a family of solutions which blows up at one point of the manifold as $\varepsilon$ approaches zero?}
 \end{itemize}
 A first positive answer was given by the authors in  \cite{GMP} when $n\ge7$ and the boundary is not umbilic. In the present paper, we give a positive answer  when the boundary is umbilic. 
   Our main result reads as follows.
\begin{thm}
\label{thm:main}Let $(M,g)$ be a smooth, $n$-dimensional Riemannian
manifold of positive type with regular umbilic boundary $\partial M$.
Suppose that $n\ge11$ and that the Weyl tensor is not vanishing on
$\partial M$. Let $\gamma:M\rightarrow\mathbb{R}$ a smooth function, $\gamma>0$
on $\partial M$. Then, for $\varepsilon>0$ small there exists a positive
solution $v_{\varepsilon}$ of the problem \eqref{eq:P}
 such that $v_{\varepsilon}$ blows up at a suitable point $q_{0}\in\partial M$
as $\varepsilon\rightarrow0$.
\end{thm}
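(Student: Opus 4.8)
The plan is to construct $v_\varepsilon$ by a finite-dimensional Lyapunov--Schmidt reduction, following the scheme of \cite{GMP} but now with the geometry of an umbilic boundary. Since $(M,g)$ is of positive type, the quadratic form $v\mapsto\int_M(|\nabla v|^2+\tfrac{n-2}{4(n-1)}R_gv^2)\,dv_g+\tfrac{n-2}{2}\int_{\partial M}h_gv^2\,d\sigma_g$ is equivalent to the square of the $H^1_g(M)$ norm, and solutions of \eqref{eq:P} are precisely the positive critical points of
\[
J_{\varepsilon}(v)=\frac12\int_M\Big(|\nabla v|^2+\tfrac{n-2}{4(n-1)}R_g v^2\Big)dv_g+\frac12\int_{\partial M}\Big(\tfrac{n-2}{2}h_g+\varepsilon\gamma\Big)v^2d\sigma_g-\frac{(n-2)^2}{2(n-1)}\int_{\partial M}(v_+)^{\frac{2(n-1)}{n-2}}d\sigma_g
\]
on $H^1_g(M)$. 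As approximate solution I would take the \emph{boundary bubble}: if $U$ is the standard positive solution of $-\Delta U=0$ in $\mathbb{R}^n_+$ with $\partial_\nu U=(n-2)U^{n/(n-2)}$ on $\partial\mathbb{R}^n_+$, and $U_\delta(z)=\delta^{-(n-2)/2}U(z/\delta)$, let $W_{\delta,q}$ be $U_\delta$ transplanted to a neighbourhood of $q\in\partial M$ through conformal Fermi coordinates, truncated away from $q$, plus a lower-order correction designed to make $L_gW_{\delta,q}$ smaller. Since $\partial M$ is umbilic the trace-free second fundamental form vanishes, so the $\delta^2$ boundary term that governs the non-umbilic case of \cite{GMP} is absent and the first nontrivial geometric contribution to the energy, of order $\delta^4$, is carried by the Weyl tensor $W_g$ of $(M,g)$ at $q$.

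Next I would set up the reduction. Let $K_{\delta,q}:=\mathrm{span}\{\partial_\delta W_{\delta,q},\partial_{q_1}W_{\delta,q},\dots,\partial_{q_{n-1}}W_{\delta,q}\}$ be the approximate kernel generated by the dilation and the $n-1$ boundary translations of $U$, and $K_{\delta,q}^\perp$ its orthogonal complement in $H^1_g(M)$. Using the non-degeneracy of $U$ — the kernel of its linearization is exactly the analogous $n$-dimensional space — one checks that the linearization of \eqref{eq:P} at $W_{\delta,q}$, composed with the projection onto $K_{\delta,q}^\perp$, is invertible with inverse bounded uniformly for $q\in\partial M$ and $\delta$ in the relevant range. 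A contraction argument then yields, for each such $(\delta,q)$, a unique $\phi_{\delta,q}\in K_{\delta,q}^\perp$, of class $C^1$ in $(\delta,q)$, with $W_{\delta,q}+\phi_{\delta,q}$ solving \eqref{eq:P} modulo $K_{\delta,q}$, together with a quantitative bound showing that the contribution of $\phi_{\delta,q}$ to the energy is negligible compared with the $\delta^4$ and $\varepsilon\delta$ terms appearing below.

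It then remains to find critical points of the reduced functional $\mathcal J_\varepsilon(\delta,q):=J_\varepsilon(W_{\delta,q}+\phi_{\delta,q})$, for which I would establish an expansion of the form
\[
\mathcal J_\varepsilon(\delta,q)=\mathfrak c_0+a\,\varepsilon\,\gamma(q)\,\delta-b\,|W_g(q)|^2\,\delta^4+o\big(\varepsilon\delta+\delta^4\big),
\]
uniformly for $q\in\partial M$ and $\delta=\Lambda\varepsilon^{1/3}$ with $\Lambda$ in a fixed compact subinterval of $(0,\infty)$, where $a,b>0$, $\mathfrak c_0$ is the energy of the standard bubble, the term $a\varepsilon\gamma(q)\delta$ comes from $\tfrac\varepsilon2\int_{\partial M}\gamma W_{\delta,q}^2$ and $-b|W_g(q)|^2\delta^4$ is the Weyl contribution in the Fermi expansion of $L_g$ and of $dv_g,d\sigma_g$. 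Substituting $\delta=\Lambda\varepsilon^{1/3}$ turns the main part into $\mathfrak c_0+\varepsilon^{4/3}\big(a\gamma(q)\Lambda-b|W_g(q)|^2\Lambda^4\big)+o(\varepsilon^{4/3})$. For each fixed $q$, since $\gamma(q)>0$ and $W_g(q)\ne0$, the function $\Lambda\mapsto a\gamma(q)\Lambda-b|W_g(q)|^2\Lambda^4$ has a unique non-degenerate maximum at $\Lambda(q)=\big(a\gamma(q)/(4b|W_g(q)|^2)\big)^{1/3}$ with positive maximal value $c\,\gamma(q)^{4/3}|W_g(q)|^{-2/3}$, and by compactness of $\partial M$ this continuous positive function attains an extremum at some $q_0$. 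A standard min-max (or topological-degree) argument, exploiting the $C^1$-closeness of $\mathcal J_\varepsilon$ to this limit profile, then produces a critical point $(\delta_\varepsilon,q_\varepsilon)$ of $\mathcal J_\varepsilon$ with $\delta_\varepsilon\varepsilon^{-1/3}\to\Lambda(q_0)$ and $q_\varepsilon\to q_0$; the corresponding $v_\varepsilon:=W_{\delta_\varepsilon,q_\varepsilon}+\phi_{\delta_\varepsilon,q_\varepsilon}$ solves \eqref{eq:P}, is positive because the dominant term $W_{\delta_\varepsilon,q_\varepsilon}$ is positive and $\phi_{\delta_\varepsilon,q_\varepsilon}$ is a small remainder, and blows up at $q_0$ as $\varepsilon\to0$.

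The hard part, and the source of the hypothesis $n\ge11$, is the energy expansion together with the error control for $\phi_{\delta,q}$. Because the $\delta^2$ boundary term of the non-umbilic case is gone, the $\delta^4$ Weyl term has to be extracted from a delicate Fermi-coordinate expansion of the conformal Laplacian and of the volume elements, tracking many curvature contributions and checking that the relevant integrals over $\mathbb{R}^n_+$ — which behave like $\int_{\mathbb{R}^n_+}|z|^4U(z)^2\,dz$ — converge; this already needs $n\ge9$. One then has to show that both the ansatz error and the contribution of $\phi_{\delta,q}$ to $\mathcal J_\varepsilon$ are genuinely $o(\varepsilon^{4/3})$ in the regime $\delta\sim\varepsilon^{1/3}$, and it is this last point that consumes the two extra dimensions, forcing $n\ge11$.
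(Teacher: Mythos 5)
Your high-level scheme (conformal Fermi coordinates, boundary bubble plus a correction, kernel spanned by the dilation and translations, finite-dimensional reduction, scaling $\delta\sim\varepsilon^{1/3}$, maximization of the reduced functional) matches the paper. But there is a concrete gap in the crucial energy expansion, and it is not a minor technicality: you claim the $\delta^4$ coefficient is $-b\,|W_g(q)|^2$ for some universal $b>0$, and you then localize the blow-up point $q_0$ by maximizing $\gamma(q)^{4/3}|W_g(q)|^{-2/3}$. Neither claim is correct. Because $\partial M$ is umbilic, the $\delta^2$ term in the energy vanishes, and the $\delta^4$ coefficient that the paper obtains is
\[
\varphi(q)=\frac12\int_{\mathbb{R}^n_+}v_q\,\Delta v_q
+\frac{(n-2)(n-8)}{4(n^2-1)}R_{nn,nn}(q)\,I_2
-\frac{n-2}{96(n-1)}|\bar W(q)|^2\,I_4,
\]
where $v_q$ is the correction function solving the linear problem \eqref{eq:vqdef} (the ``lower-order correction'' you mention only in passing), $R_{nn,nn}$ is a second normal derivative of the Ricci curvature, and only the last term is the Weyl contribution you wrote down. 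The term $\frac12\int v_q\,\Delta v_q$ is \emph{not} explicitly computable and cannot be subsumed into a Weyl norm. The paper explicitly remarks that the blow-up location cannot be identified in closed form precisely because of this term; your claim that $q_0$ maximizes $\gamma^{4/3}|W_g|^{-2/3}$ is therefore unfounded.

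What actually makes the argument work is a sign lemma you have not supplied: one must prove $\varphi(q)\le 0$, and then use the nonvanishing of $\bar W(q)$ to get $\varphi(q)<0$. The paper gets this by (i) the algebraic identity $R_{nn,nn}=-2R_{nins}^2$ (so the middle term is $\le 0$ for $n\ge 8$) and (ii) the nontrivial Poincar\'e-type inequality $\int_{\mathbb{R}^n_+}v_q\,\Delta v_q\le 0$, proved in Lemma \ref{lem:vq} by conjugating $v_q$ to the ball via inversion and invoking the known first Steklov eigenvalue of the ball. Without this the reduced function $G(\lambda,q)=\lambda\gamma(q)B+\lambda^4\varphi(q)$ need not have a nondegenerate interior maximum in $\lambda$. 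You also need the orthogonality $\int_{\partial\mathbb{R}^n_+}U^{n/(n-2)}v_q=0$ (again from Lemma \ref{lem:vq}) for the quadratic correction term in the energy to cancel; skipping it invalidates the $\delta^4$ bookkeeping. Finally, your dimensional count is imprecise: the convergence of the weighted integrals like $I_4$ already holds for $n\ge 7$, whereas the constraint $n\ge 11$ comes from the remainder estimates in Lemma \ref{lem:R} and in the proof of Lemma \ref{lem:espansione}, where several bounds require $n>10$.
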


Let us make some comments on our result.
\begin{itemize}
\item[(1)]
The proof relies on the classical finite dimensional Ljapunov-Schmidt procedure which has been successfully used in 
studying blowing-up phenomena in Yamabe type problems.
However, here the umbilicity of the boundary forces us to deal with higher order terms in the expansion of the metric $g$, which makes the proof of the result technically harder than the one in \cite{GMP}.

\item[(2)] Our theorem  does not provide the precise location of the blow-up point, because the explicit solution 
to linear problem \eqref{eq:vqdef} is necessary and this is far from being possible.
Actually, it would be really interesting to detect the geometric function whose critical points 
generate the blowing-up solutions.

\item[(3)] We believe that the result holds true if $\gamma$ is positive
somewhere (and not necessarily positive everywhere in $\partial M$) as suggested by  Remark
\ref{rem:esempio}, where we 
  exhibit a smooth function $\gamma$ which is not necessarily everywhere positive, for which problem
(\ref{eq:P}) has a family of blowing-up solutions.
 Actually, we strongly believe that   if $\gamma$ is negative everywhere there are no blowing-up solutions as $\varepsilon$ approaches 0, i.e. the problem (\ref{eq:P})
is compact.

\item[(4)]
Our ideas can be also applied to study the {\em non-linear perturbation problem}
\begin{equation}
\left\{ \begin{array}{cc}
-\Delta_{g}v+\frac{n-2}{4(n-1)}R_{g}v=0 & \text{ in }M\\
\frac{\partial v}{\partial\nu}+\frac{n-2}{2}h_{g}v =(n-2)v^{\frac{n}{n-2}+ \varepsilon} & \text{ on }\partial M
\end{array}\right.\label{eq:nP}
\end{equation}
In particular, we can extend the results of the authors  in \cite{GMP16} to the geometric
problem \eqref{eq:nP}. 

\end{itemize}
 
 The proof of the result relies on a finite dimensional Ljapunov-Schmidt reduction, which is
 carried out as usual through  different steps: first we find a good approximated solution (Section \ref{preli}), next we reduce the problem to a finite dimensional one (Section \ref{sec:red}), then we study the reduced problem (Section \ref{reduced})
 and finally we complete the proof of Theorem \ref{thm:main} (Section \ref{completed}).

\section{Preliminaries and variational framework}\label{preli}
\begin{not*}
We collect here our main notations. We will use the indices $1\le i,j,k,m,p,r,s\le n-1$
and $1\le a,b,c,d\le n$. We denote by $g$ the Riemannian metric,
by $R_{abcd}$ the full Riemannian curvature tensor, by $R_{ab}$
the Ricci tensor and by $R_{g}$ the scalar curvature of $(M,g)$;
moreover the Weyl tensor of $(M,g)$ will be denoted by $W_{g}$.

Let $\left(h_{ij}\right)_{ij}(q)$
be the tensor of the second fundamental form in a point $q\in\partial M$.
We recall that the boundary $\partial M$ is umbilic (i.e. composed only of umbilic points) when, for all $q\in \partial M$, $h_{ij}(q)=0$ 
for all $i\neq j$ and $h_{ii}(q)=h_{g}(q)$,
$h_{g}(q)$ being the mean curvature of $\partial M$ at the point
$q$. 

The bar over an object (e.g. $\bar{W}_{g}$) will mean the restriction
to this object to the metric of $\partial M$. By $-\Delta_{g}$ we
denote the Laplace-Beltrami operator on $(M,g)$ and we will often
use the common notation for conformal Laplacian $L_{g}=-\Delta_{g}+\frac{n-2}{4(n-1)}R_{g}$
and the conformal boundary operator $B_{g}=\frac{\partial}{\partial\nu}+\frac{n-2}{2}h_{g}$,
where $\nu$ is the outward normal to $\partial M$ . When we derive a tensor, e.g. $T_{ij}$, with respect to a coordinate $y_l$ we use the usual shortened notation 
$T_{ij,l}$ for $\frac{\partial}{\partial y_l}T_{ij}$.
\end{not*}
\begin{rem}
Since $\partial M$ is umbilic for any $q\in\partial M$, there exists
a metric $\tilde{g}_{q}=\tilde{g}$, conformal to $g$, $\tilde{g}_{q}=\Lambda_{q}^{\frac{4}{n-2}}g_{q}$
such that
\begin{equation}
|\text{det}\tilde{g}_{q}(y)|=1+O(|y|^{n})\label{eq:|g|}
\end{equation}
\begin{equation}
|\tilde{h}_{ij}(y)|=o(|y^{3}|)\label{eq:hij}
\end{equation}
\begin{align}
\tilde{g}^{ij}(y)= & \delta_{ij}+\frac{1}{3}\bar{R}_{ikjl}y_{k}y_{l}+R_{ninj}y_{n}^{2}\label{eq:gij}\\
 & +\frac{1}{6}\bar{R}_{ikjl,m}y_{k}y_{l}y_{m}+R_{ninj,k}y_{n}^{2}y_{k}+\frac{1}{3}R_{ninj,n}y_{n}^{3}\nonumber \\
 & +\left(\frac{1}{20}\bar{R}_{ikjl,mp}+\frac{1}{15}\bar{R}_{iksl}\bar{R}_{jmsp}\right)y_{k}y_{l}y_{m}y_{p}\nonumber \\
 & +\left(\frac{1}{2}R_{ninj,kl}+\frac{1}{3}\text{Sym}_{ij}(\bar{R}_{iksl}R_{nsnj})\right)y_{n}^{2}y_{k}y_{l}\nonumber \\
 & +\frac{1}{3}R_{ninj,nk}y_{n}^{3}y_{k}+\frac{1}{12}\left(R_{ninj,nn}+8R_{nins}R_{nsnj}\right)y_{n}^{4}+O(|y|^{5})\nonumber 
\end{align}
\begin{equation}
\bar{R}_{\tilde{g}_{q}}(y)=O(|y|^{2})\text{ and }\partial_{ii}^{2}\bar{R}_{\tilde{g}_{q}}(q)=-\frac{1}{6}|\bar{W}(q)|^{2}\label{eq:Rii}
\end{equation}
\begin{equation}
\bar{R}_{kl}(q)=R_{nn}(q)=R_{nk}(q)=0\label{eq:Ricci}
\end{equation}
uniformely with respect to $q\in M$ and $y\in T_{q}(M)$. Also,we
have $\Lambda_{q}(q)=1$ and $\nabla\Lambda_{q}(q)=0$. This results
are contained in \cite{M1,KMW}.
\end{rem}
The conformal Laplacian and the conformal boundary operator transform
under the change of metric $\tilde{g}_{q}=\Lambda_{q}^{\frac{4}{n-2}}g_{q}$ 
in the following way:
\begin{align*}
L_{\tilde{g}_{q}}\varphi & =\Lambda_{q}^{-\frac{n+2}{n-2}}L_{g}(\Lambda_{q}\varphi)\\
B_{\tilde{g}_{q}}\varphi & =\Lambda_{q}^{-\frac{n}{n-2}}B_{g}(\Lambda_{q}\varphi)
\end{align*}
by these transformations we can recast Problem (\ref{eq:P}) as
follows: $v:=\Lambda_{q}u$ is a positive solution of (\ref{eq:P}),
if and only if $u$ is a positive solution of 
\begin{equation}
\left\{ \begin{array}{cc}
L_{\tilde{g}_{q}}u=0 & \text{ in }M\\
B_{\tilde{g}_{q}}u+\varepsilon[\Lambda_{q}^{-\frac{2}{n-2}}\gamma]u=(n-2)u^{\frac{n}{n-2}} & \text{ on }\partial M
\end{array}\right.\label{eq:Ptilde}
\end{equation}
From now on we set $\tilde{\gamma}=\Lambda_{q}^{-\frac{2}{n-2}}\gamma$.

We want to find a solution $u$ of problem (\ref{eq:Ptilde}) by a
finite dimensional reduction: we will look for a solution of (\ref{eq:Ptilde})
of the form $u=W_{\delta,q}+\delta^{2}V_{\delta,q}+\Phi$ where $W_{\delta,q}$
and $V_{\delta,q}$ are functions depending only by $q\in\partial M$
and $\delta>0$ which will be defined in the following and $\Phi$
is a suitable remainder term. So we will find a solution of the original
problem (\ref{eq:P}) of the type 
\[
v=\Lambda_{q}\left[W_{\delta,q}+\delta^{2}V_{\delta,q}+\Phi\right].
\]
In the following we simply use $\tilde{W}_{\delta,q}$, $\tilde{V}_{\delta,q}$,
$\tilde{\Phi}$ respectively for $\Lambda_{q}W_{\delta,q}$, $\Lambda_{q}V_{\delta,q}$,
$\Lambda_{q}\Phi$.

If $Q(M,\partial M)>0$, we can endow $H_{g}^{1}(M)=H^1(M)$ with the following
equivalent scalar product 
\begin{equation}
\left\langle \left\langle u,v\right\rangle \right\rangle _{g}=\int_{M}(\nabla_{g}u\nabla_{g}v+\frac{n-2}{4(n-1)}R_{g}uv)d\mu_{g}+\frac{n-2}{2}\int_{\partial M}h_{g}uvd\nu_{g}\label{eq:prodscal}
\end{equation}
which leads to the norm $\|\cdot\|_{g}$ equivalent to the usual one.
We remark also that $\Lambda_{q}$ is an isometry in the sense that,
by (\ref{eq:prodscal}), for $u,v\in H^{1}(M)$
\[
\left\langle \left\langle \Lambda_{q}u,\Lambda_{q}v\right\rangle \right\rangle _{g}=\left\langle \left\langle u,v\right\rangle \right\rangle _{\tilde{g}_{q}}\text{ and, consequently, }\|\Lambda_{q}u\|_{g}=\|u\|_{\tilde{g}_{q}}.
\]

Given $q\in\partial M$ and $\psi_{q}^{\partial}:\mathbb{R}_{+}^{n}\rightarrow M$
the Fermi coordinates of $M$ defined in a neighborhood of $q$ we
set
\begin{align*}
W_{\delta,q}(\xi):= & U_{\delta}\left(\left(\psi_{q}^{\partial}\right)^{-1}(\xi)\right)\chi\left(\left(\psi_{q}^{\partial}\right)^{-1}(\xi)\right)\\
= & \frac{1}{\delta^{\frac{n-2}{2}}}U\left(\frac{y}{\delta}\right)\chi(y)=\frac{1}{\delta^{\frac{n-2}{2}}}U\left(x\right)\chi(\delta x)
\end{align*}
where $y=(z,t)$, with $z\in\mathbb{R}^{n-1}$ and $t\ge0$, $\delta x=y=\left(\psi_{q}^{\partial}\right)^{-1}(\xi)$
and $\chi$ is a radial cut off function, with support in ball of
radius $R$, $R$ being the injectivity radius for the Fermi coordinates. 

Here $U_{\delta}(y)=\frac{1}{\delta^{\frac{n-2}{2}}}U\left(\frac{y}{\delta}\right)$
is the one parameter family of solution of the problem
\begin{equation}
\left\{ \begin{array}{ccc}
-\Delta U_{\delta}=0 &  & \text{on }\mathbb{R}_{+}^{n};\\
\frac{\partial U_{\delta}}{\partial t}=-(n-2)U_{\delta}^{\frac{n}{n-2}} &  & \text{on \ensuremath{\partial}}\mathbb{R}_{+}^{n},
\end{array}\right.\label{eq:Udelta}
\end{equation}
 that is ${\displaystyle U(z,t):=\frac{1}{\left[(1+t)^{2}+|z|^{2}\right]^{\frac{n-2}{2}}}}$
is the standard bubble in $\mathbb{R}_{+}^{n}$.

Now, if we consider the linearized problem 
\begin{equation}
\left\{ \begin{array}{ccc}
 & -\Delta\phi=0 & \text{on }\mathbb{R}_{+}^{n},\\
 & \frac{\partial\phi}{\partial t}+nU^{\frac{2}{n-2}}\phi=0 & \text{on \ensuremath{\partial}}\mathbb{R}_{+}^{n},\\
 & \phi\in H^{1}(\mathbb{R}_{+}^{n}).
\end{array}\right.\label{eq:linearizzato}
\end{equation}
we have that every solution of (\ref{eq:linearizzato}) is a linear
combination of the functions $j_{1},\dots,j_{n}$ defined by 
\begin{eqnarray}
j_{i}=\frac{\partial U}{\partial y_{i}},\ i=1,\dots n-1; &  & j_{n}=\frac{n-2}{2}U+\sum_{a=1}^{n}y_{a}\frac{\partial U}{\partial y_{a}}\label{eq:sol-linearizzato}
\end{eqnarray}
(for a proof of this result, see, for instance, \cite[Lemma 6]{GMP}).
By means of functions $j_{i}$ we define we define, for $b=1,\dots,n$
\[
Z_{\delta,q}^{b}(\xi)=\frac{1}{\delta^{\frac{n-2}{2}}}j_{b}\left(\frac{1}{\delta}\left(\psi_{q}^{\partial}\right)^{-1}(\xi)\right)\chi\left(\left(\psi_{q}^{\partial}\right)^{-1}(\xi)\right)
\]
and we decompose $H^{1}(M)$ in the direct sum of the following two
subspaces 
\begin{align*}
\tilde{K}_{\delta,q} & =\text{Span}\left\langle \Lambda_{q}Z_{\delta,q}^{1},\dots,\Lambda_{q}Z_{\delta,q}^{n}\right\rangle \\
\tilde{K}_{\delta,q}^{\bot} & =\left\{ \varphi\in H^{1}(M)\ :\ \left\langle \left\langle \varphi,\Lambda_{q}Z_{\delta,q}^{b}\right\rangle \right\rangle _{g}=0,\ b=1,\dots,n\right\} 
\end{align*}
and we define the projections 
\[
\tilde{\Pi}=H^{1}(M)\rightarrow\tilde{K}_{\delta,q}\text{ and }\tilde{\Pi}^{\bot}=H^{1}(M)\rightarrow\tilde{K}_{\delta,q}^{\bot}.
\]

Given $q\in\partial M$ we also define in a similar way 
\[
V_{\delta,q}(\xi)=\frac{1}{\delta^{\frac{n-2}{2}}}v_{q}\left(\frac{1}{\delta}\left(\psi_{q}^{\partial}\right)^{-1}(\xi)\right)\chi\left(\left(\psi_{q}^{\partial}\right)^{-1}(\xi)\right)
\]
and 
\[
\left(v_{q}\right)_{\delta}(y)=\frac{1}{\delta^{\frac{n-2}{2}}}v_{q}\left(\frac{y}{\delta}\right);
\]
here $v_{q}:\mathbb{R}_{+}^{n}\rightarrow\mathbb{R}$ is the solution
of the linear problem 
\begin{equation}
\left\{ \begin{array}{ccc}
-\Delta v=\left[\frac{1}{3}\bar{R}_{ijkl}(q)y_{k}y_{l}+R_{ninj}(q)y_{n}^{2}\right]\partial_{ij}^{2}U &  & \text{on }\mathbb{R}_{+}^{n}\\
\frac{\partial v}{\partial y_{n}}=-nU^{\frac{2}{n-2}}v &  & \text{on }\partial\mathbb{R}_{+}^{n}
\end{array}\right.\label{eq:vqdef}
\end{equation}
These solutions will be used in the blow up estimate in the next:
indeed, by means of the choice of $v_{q}$ we will be able to cancel
the first order term in the following formula (\ref{eq:deltaw+v})
and to have the correct size of the remainder term in the finite dimensional
reduction (Lemma \ref{lem:R}).
\begin{lem}
\label{lem:vq}There exists a unique $v_{q}:\mathbb{R}_{+}^{n}\rightarrow\mathbb{R}$
solution of the problem (\ref{eq:vqdef}) $L^{1}(\mathbb{R}_{+}^{n})$-ortogonal
to $j_{b}$ for all $b=1,\dots,n$. Moreover the function $q\mapsto v_{q}$
is $C^{2}(\partial M)$ and it holds
\begin{equation}
|\nabla^{\tau}v_{q}(y)|\le C(1+|y|)^{4-\tau-n}\text{ for }\tau=0,1,2,\label{eq:gradvq}
\end{equation}
\begin{equation}
\int_{\partial\mathbb{R}_{+}^{n}}U^{\frac{n}{n-2}}(t,z)v_{q}(t,z)dz=0\label{eq:Uvq}
\end{equation}
and 
\begin{equation}
\int_{\partial\mathbb{R}_{+}^{n}}v_{q}(t,z)\Delta v_{q}(t,z)dz\le0,\label{new}
\end{equation}
where $y\in\mathbb{R}_{+}^{n}$, $y=(t,z)$ with $t\ge0$ and $z\in\mathbb{R}^{n-1}$.
\end{lem}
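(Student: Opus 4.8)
The plan is to establish existence, uniqueness, regularity, and the three quantitative estimates by treating \eqref{eq:vqdef} as a linear boundary value problem on $\mathbb{R}_+^n$ governed by the linearized operator $\phi\mapsto(-\Delta\phi,\ \partial_t\phi+nU^{2/(n-2)}\phi)$ whose kernel is spanned by $j_1,\dots,j_n$. First I would recast the problem variationally: the natural bilinear form is $a(v,w)=\int_{\mathbb{R}_+^n}\nabla v\cdot\nabla w - n\int_{\partial\mathbb{R}_+^n}U^{2/(n-2)}vw$, and the right-hand side $f:=\big[\tfrac13\bar R_{ijkl}(q)y_ky_l+R_{ninj}(q)y_n^2\big]\partial^2_{ij}U$ is a fixed smooth function with $|f(y)|\le C(1+|y|)^{-n+2-2}\cdot|y|^2=C(1+|y|)^{-n+2}$ decay (since $\partial^2_{ij}U$ decays like $(1+|y|)^{-n}$ and is multiplied by a quadratic). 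The point is that $f$ is $L^2_{\mathrm{loc}}$ and lies in the dual of the right weighted space. To solve modulo the kernel, I would work in $\mathcal D^{1,2}(\mathbb{R}_+^n)$ (or a weighted variant), decompose off the $n$-dimensional kernel $K=\mathrm{Span}\langle j_1,\dots,j_n\rangle$, and check the solvability (Fredholm) condition: a solution orthogonal to $K$ exists iff $f$ is orthogonal to $K$ in the appropriate pairing. Here the key algebraic computation is that $\int j_b\,\Delta v$-type pairings against $f$ vanish — this follows because $f$ is built from second derivatives of $U$ contracted with a \emph{symmetric trace-adjusted curvature tensor}, and the identities \eqref{eq:Ricci} ($\bar R_{kl}(q)=R_{nn}(q)=R_{nk}(q)=0$) are precisely what kills the components of $f$ that pair nontrivially with the translations $j_i$ and the dilation $j_n$. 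Once solvability holds, projecting out $K$ (i.e. requiring $L^1$-orthogonality to each $j_b$) pins down $v_q$ uniquely.

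Next I would address the decay estimate \eqref{eq:gradvq}. The right-hand side $f$ decays like $(1+|y|)^{-n+2}$, the Neumann datum vanishes, and $v$ should inherit decay governed by the fundamental solution: since $-\Delta v = f$ with $f=O(|y|^{-n+2})$, a barrier/comparison argument (or representation via the Green's function of the half-space Neumann-type problem, then splitting into near and far field) gives $v_q(y)=O((1+|y|)^{4-n})$; the $|y|^4$ versus $|y|^2$ gain over $f$ is the standard two-derivative elliptic gain, and one must check no slower-decaying homogeneous solution is being added in — this is exactly where the orthogonality normalization is used again, since the translations $j_i\sim(1+|y|)^{-n}$ and dilation $j_n\sim(1+|y|)^{-(n-2)}$ decay faster anyway, so the obstruction is really the potential appearance of the \emph{growing} Jacobi-type solution, excluded by working in $H^1$. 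Then differentiating the equation in $q$ (the coefficients $\bar R_{ijkl}(q)$, $R_{ninj}(q)$ are $C^\infty$ in $q$ since the metric is smooth) and applying the same solvability-plus-decay machinery to $\partial_q v_q$ and $\partial^2_q v_q$ yields the $C^2(\partial M)$ regularity together with the uniform decay bounds for $\tau=1,2$; I would phrase this as: the same linear operator, now with a new right-hand side that is again a curvature-type expression times derivatives of $U$, hence again in the correct weighted space and orthogonal to the kernel.

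For the two integral identities, \eqref{eq:Uvq} is immediate from testing the weak formulation against $j_n=\tfrac{n-2}{2}U+\sum y_a\partial_{y_a}U$: since $v_q\perp j_n$ in $L^1$ and by an integration-by-parts identity on the half-space the pairing $\langle v_q, j_n\rangle$ reduces (using the boundary equation $\partial_{y_n}v_q = -nU^{2/(n-2)}v_q$ and the explicit form of $j_n$) to a boundary integral proportional to $\int_{\partial\mathbb{R}_+^n}U^{n/(n-2)}v_q\,dz$; orthogonality then forces this to vanish. (Alternatively, one integrates \eqref{eq:vqdef} against $U$ and uses the PDE satisfied by $U$.) Finally \eqref{new}: writing $\int_{\partial\mathbb{R}_+^n}v_q\Delta v_q\,dz$ — interpreting this as the boundary trace of the product, or more likely as $\int_{\mathbb{R}_+^n}$ of a related quantity after an integration by parts — I would use the energy identity obtained by testing $-\Delta v_q=f$ with $v_q$ itself: $\int|\nabla v_q|^2 = \int f v_q + n\int_{\partial}U^{2/(n-2)}v_q^2$, and combine with the variational characterization (the quadratic form $a(\cdot,\cdot)$ restricted to $K^\perp$ is positive or the solution minimizes a convex functional) to extract the sign. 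The main obstacle I anticipate is precisely this last sign estimate together with the solvability computation: verifying that $f$ is orthogonal to all of $K$ requires carefully exploiting the symmetrization of the curvature coefficients and the vanishing \eqref{eq:Ricci}, and getting the correct sign in \eqref{new} will require identifying the right functional whose critical/minimal value controls the relevant bilinear expression — a routine-looking but delicate integration-by-parts bookkeeping on the half-space with the nonlocal boundary term $nU^{2/(n-2)}$ in play.
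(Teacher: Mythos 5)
Your overall skeleton is correct — solvability by checking orthogonality of the source to the kernel $\{j_1,\dots,j_n\}$ using the Ricci vanishings \eqref{eq:Ricci}, decay from the two-derivative elliptic gain, the identities from suitable testing, and $C^{2}$ regularity in $q$ by differentiating the curvature coefficients and re-running the same solvability argument. Your second suggestion for \eqref{eq:Uvq} (testing $-\Delta v_q=f$ against $U$, which gives $2\int_{\partial}U^{n/(n-2)}v_q=-\int_{\mathbb{R}_+^n} fU$ and then invoking the same curvature cancellations as in the solvability check) is valid and works just as well as the paper's route. However, the paper does not work directly in the half-space with barriers or the half-space Green's function: it passes through the conformal inversion $F:\mathbb{R}_+^n\to B^n$, under which the weighted Robin problem for $v_q$ becomes a constant-coefficient Robin problem for $\bar v_q:=(U^{-1}v_q)\circ F^{-1}$ on a ball with a smooth bounded right-hand side. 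That single change of variables buys existence and uniqueness for free (standard elliptic theory on a bounded domain), the decay estimate \eqref{eq:gradvq} (by representing $\bar v_q$ with the ball's Green's function and undoing the inversion), and — crucially — the sign in \eqref{new}.

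That last point is where your sketch has a genuine gap. You propose to conclude \eqref{new} from positivity of the quadratic form $a(v,v)=\int_{\mathbb{R}_+^n}|\nabla v|^2-n\int_{\partial\mathbb{R}_+^n}U^{2/(n-2)}v^2$ on $K^\perp$, but this is false: $U$ itself is orthogonal to $K=\mathrm{span}\langle j_1,\dots,j_n\rangle$, yet $a(U,U)=(n-2)\int_{\partial}U^{2(n-1)/(n-2)}-n\int_{\partial}U^{2(n-1)/(n-2)}=-2\int_{\partial}U^{2(n-1)/(n-2)}<0$. Orthogonality to the kernel alone is not enough; one also needs the orthogonality to $U$ in the weighted boundary $L^2$ inner product, which is precisely what \eqref{eq:Uvq} provides. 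Even granting that, one still needs a quantitative spectral statement to close the argument, and that is what the inversion makes transparent: on the ball $B^n$, \eqref{eq:Uvq} translates to $\int_{\partial B^n}\bar v_q=0$, and the second Steklov eigenvalue of $B^n$ (radius $1/2$) equals $2$, i.e.\ $\inf\bigl\{\int_{B^n}|\nabla\phi|^2/\int_{\partial B^n}\phi^2 : \int_{\partial B^n}\phi=0\bigr\}=2$. Plugged into the energy identity on the ball this yields $-\int_{B^n}\bar v_q\Delta\bar v_q\ge0$, and conformal invariance transfers this back to $-\int_{\mathbb{R}_+^n}v_q\Delta v_q\ge0$, which is \eqref{new}. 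You correctly flagged that extracting the sign was the delicate point, but the mechanism you reached for (positivity of $a$ on $K^\perp$, or a minimization principle) does not supply it; the missing ingredient is the explicit Steklov gap on the ball together with the orthogonality furnished by \eqref{eq:Uvq}.
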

The proof of this result is postponed to Appendix.

We have the well know maps: 
\begin{align*}
i_{g}: & H^{1}(M)\rightarrow L^{t}(\partial M)\\
i_{g}^{*}: & L^{t'}(\partial M)\rightarrow H^{1}(M)
\end{align*}
for $1\le t\le\frac{2(n-1)}{n-2}$ (and for $1\le t<\frac{2(n-1)}{n-2}$
the embedding $i$ is compact). 

Given $f\in L^{\frac{2(n-1)}{n-2}}(\partial M)$ there exists a unique
$v\in H^{1}(M)$ such that 
\begin{align}
v=i_{g}^{*}(f) & \iff\left\langle \left\langle v,\varphi\right\rangle \right\rangle _{g}=\int_{\partial M}f\varphi d\sigma\text{ for all }\varphi\label{eq:istella}\\
 & \iff\left\{ \begin{array}{ccc}
-\Delta_{g}v+\frac{n-2}{4(n-1)}R_{g}v=0 &  & \text{on }M;\\
\frac{\partial v}{\partial\nu}+\frac{n-2}{2}h_{g}v=f &  & \text{on \ensuremath{\partial}}M.
\end{array}\right.\nonumber 
\end{align}
 The functional defined on $H^{1}(M)$ associated to (\ref{eq:P})
is 
\begin{align*}
J_{\varepsilon,g}(v): & =\frac{1}{2}\int_{M}|\nabla_{g}v|^{2}+\frac{n-2}{4(n-1)}R_{g}v^{2}d\mu_{g}+\frac{n-2}{4}\int_{\partial M}h_{g}v^{2}d\sigma_{g}\\
 & +\frac{1}{2}\int_{\partial M}\varepsilon\gamma v^{2}d\sigma_{g}-\frac{(n-2)^{2}}{2(n-1)}\int_{\partial M}\left(v^{+}\right)^{\frac{2(n-1)}{n-2}}d\sigma_{g}.
\end{align*}
Notice that, if we define 
\begin{align*}
\tilde{J}_{\varepsilon,\tilde{g}_{q}}(u): & =\frac{1}{2}\int_{M}|\nabla_{\tilde{g}_{q}}u|^{2}+\frac{n-2}{4(n-1)}R_{\tilde{g}_{q}}u^{2}d\mu_{\tilde{g}_{q}}+\frac{n-2}{4}\int_{\partial M}h_{\tilde{g}_{q}}u^{2}d\sigma_{\tilde{g}_{q}}\\
 & +\frac{1}{2}\int_{\partial M}\varepsilon\tilde{\gamma}u^{2}d\sigma_{\tilde{g}_{q}}-\frac{(n-2)^{2}}{2(n-1)}\int_{\partial M}\left(u^{+}\right)^{\frac{2(n-1)}{n-2}}d\sigma_{v},
\end{align*}
then we have
\begin{equation}
J_{\varepsilon,g}(\Lambda_{q}u)=\tilde{J}_{\varepsilon,\tilde{g}_{q}}(u)\label{eq:Jlambda}
\end{equation}

\section{\label{sec:red}The finite dimensional reduction}

Solving problem (\ref{eq:P}) is equivalent to find $v\in H^{1}(M)$
such that 

\[
v=i_{g}^{*}(f(v)-\varepsilon\gamma v)
\]
where 
\[
f(v)=(n-2)\left(v^{+}\right)^{\frac{n}{n-2}}
\]
We remark that, if $v\in H_{g}^{1}(M)$, then $f(v)\in L^{\frac{2(n-1)}{n}}(\partial M)$. 

We look for a positive solution of (\ref{eq:P}) in the form 
\[
v=\Lambda_{q}u=\tilde{W}_{\delta,q}+\delta^{2}\tilde{V}_{\delta,q}+\tilde{\Phi}
\]
(we recall that, if $f:M\rightarrow\mathbb{R}$, we use the notation
$\tilde{f}:=\Lambda_{q}f$). Thus we can rewrite, in light of the
previous orthogonal decomposition, Problem (\ref{eq:P}) as 
\begin{align}
\tilde{\Pi}\left\{ \tilde{W}_{\delta,q}+\delta^{2}\tilde{V}_{\delta,q}+\tilde{\Phi}-i_{g}^{*}\left[f(\tilde{W}_{\delta,q}+\delta^{2}\tilde{V}_{\delta,q}+\tilde{\Phi})-\varepsilon\gamma\tilde{W}_{\delta,q}+\delta^{2}\tilde{V}_{\delta,q}+\tilde{\Phi}\right]\right\}  & =0\label{eq:Pi}\\
\tilde{\Pi}^{\bot}\left\{ \tilde{W}_{\delta,q}+\delta^{2}\tilde{V}_{\delta,q}+\tilde{\Phi}-i_{g}^{*}\left[f(\tilde{W}_{\delta,q}+\delta^{2}\tilde{V}_{\delta,q}+\tilde{\Phi})-\varepsilon\gamma\tilde{W}_{\delta,q}+\delta^{2}\tilde{V}_{\delta,q}+\tilde{\Phi}\right]\right\}  & =0.\label{eq:Pibot}
\end{align}
Now we define the linear operator $L:\tilde{K}_{\delta,q}^{\bot}\rightarrow\tilde{K}_{\delta,q}^{\bot}$
as
\begin{equation}
L(\tilde{\Phi})=\tilde{\Pi}^{\bot}\left\{ \tilde{\Phi}-i_{g}^{*}\left(f'(\tilde{W}_{\delta,q}+\delta^{2}\tilde{V}_{\delta,q})[\tilde{\Phi}]\right)\right\} ,\label{eq:defL}
\end{equation}
we define a nonlinear term $N(\tilde{\Phi})$ and a remainder term
R as 
\begin{align}
N(\tilde{\Phi})= & \tilde{\Pi}^{\bot}\left\{ i_{g}^{*}\left(f(\tilde{W}_{\delta,q}+\delta^{2}\tilde{V}_{\delta,q}+\tilde{\Phi})-f(\tilde{W}_{\delta,q}+\delta^{2}\tilde{V}_{\delta,q})-f'(\tilde{W}_{\delta,q}+\delta^{2}\tilde{V}_{\delta,q})[\tilde{\Phi}]\right)\right\} \label{eq:defN}\\
R= & \tilde{\Pi}^{\bot}\left\{ i_{g}^{*}\left(f(\tilde{W}_{\delta,q}+\delta^{2}\tilde{V}_{\delta,q})\right)-\tilde{W}_{\delta,q}-\delta^{2}\tilde{V}_{\delta,q}\right\} ,\label{eq:defR}
\end{align}
so equation (\ref{eq:Pibot}) becomes

\[
L(\tilde{\Phi})=N(\tilde{\Phi})+R-\tilde{\Pi}^{\bot}\left\{ i_{g}^{*}\left(\varepsilon\gamma(\tilde{W}_{\delta,q}+\delta^{2}\tilde{V}_{\delta,q}+\tilde{\Phi})\right)\right\} .
\]

\begin{lem}
\label{lem:R}Assume $n\ge10$, then it holds 
\[
\|R\|_{g}=O\left(\varepsilon\delta+\delta^{3}\right)
\]
$C^{0}$-uniformly for $q\in\partial M$.
\end{lem}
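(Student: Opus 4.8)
The plan is to bound $\|R\|_g$ by duality. Since $R\in\tilde K_{\delta,q}^{\bot}$, one has $\|R\|_g=\sup\{\langle\langle R,\varphi\rangle\rangle_g:\ \varphi\in\tilde K_{\delta,q}^{\bot},\ \|\varphi\|_g\le1\}$, and for such a $\varphi$, setting $\bar\varphi:=\Lambda_q^{-1}\varphi$ (so $\|\bar\varphi\|_{\tilde g_q}\le1$) and using the defining identity \eqref{eq:istella} for $i_g^*$ together with the isometry property of $\Lambda_q$, an integration by parts gives
\[
\langle\langle R,\varphi\rangle\rangle_g=\int_{\partial M}\!\Big(f(W_{\delta,q}+\delta^2V_{\delta,q})-B_{\tilde g_q}(W_{\delta,q}+\delta^2V_{\delta,q})\Big)\bar\varphi\,d\sigma_{\tilde g_q}-\int_M\! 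L_{\tilde g_q}(W_{\delta,q}+\delta^2V_{\delta,q})\,\bar\varphi\,d\mu_{\tilde g_q}.
\]
By the trace and Sobolev embeddings this is $\lesssim\|f(\cdot)-B_{\tilde g_q}(\cdot)\|_{L^{2(n-1)/n}(\partial M)}+\|L_{\tilde g_q}(\cdot)\|_{L^{2n/(n+2)}(M)}$, so the task reduces to estimating the boundary and interior residuals of the approximate solution $W_{\delta,q}+\delta^2V_{\delta,q}$ in these Lebesgue norms. The $\varepsilon\delta$ term in the statement accounts for the $\tilde\Phi$-independent linear contribution $\tilde\Pi^{\bot}i_g^*\big(\varepsilon\tilde\gamma(\tilde W_{\delta,q}+\delta^2\tilde V_{\delta,q})\big)$, whose $\|\cdot\|_g$-norm is $\lesssim\varepsilon\big(\|W_{\delta,q}\|_{L^{2(n-1)/n}(\partial M)}+\delta^2\|V_{\delta,q}\|_{L^{2(n-1)/n}(\partial M)}\big)=O(\varepsilon\delta)$ after the rescaling used below.

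I would evaluate the two residuals in Fermi coordinates $\psi_q^{\partial}$ centered at $q$ and perform the change of variables $y=\delta x$. On the set $\{\chi\equiv1\}$ the bubble satisfies $-\Delta U_\delta=0$ and $\partial_tU_\delta=-(n-2)U_\delta^{n/(n-2)}$, while on $\{|y|\sim R\}$ all relevant quantities are $O(\delta^{(n-2)/2})$ and hence negligible; so the residuals only record the discrepancy between the flat operators and $L_{\tilde g_q},B_{\tilde g_q}$. Writing $-\Delta_{\tilde g_q}=-\tilde g^{ab}\partial^2_{ab}-(\partial_a\tilde g^{ab})\partial_b+O(|y|^{n-1})\nabla$ and using the expansions \eqref{eq:|g|}, \eqref{eq:gij}, the leading, quadratic-in-$y$, piece $-\big[\tfrac13\bar R_{ikjl}(q)y_ky_l+R_{ninj}(q)y_n^2\big]\partial^2_{ij}U_\delta$ of $-\Delta_{\tilde g_q}W_{\delta,q}$ is exactly cancelled, within $L_{\tilde g_q}(W_{\delta,q}+\delta^2V_{\delta,q})$, by $-\Delta(\delta^2V_{\delta,q})$, because $v_q$ solves \eqref{eq:vqdef}; likewise on $\partial M$ one has $-\partial_t(W_{\delta,q}+\delta^2V_{\delta,q})=f(W_{\delta,q})+f'(W_{\delta,q})[\delta^2V_{\delta,q}]$ up to cut-off errors, again by \eqref{eq:vqdef}. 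The decisive additional point is that, after this cancellation, the drift $(\partial_a\tilde g^{ab})\partial_bU_\delta$ is of size $O(|y|^2)\nabla U_\delta$ rather than $O(|y|)\nabla U_\delta$, because the linear-in-$y$ part of $\partial_a\tilde g^{ab}$ is a single trace of the curvature tensor, i.e.\ a multiple of the Ricci tensor, which vanishes at $q$ by \eqref{eq:Ricci}; similarly $c(n)R_{\tilde g_q}W_{\delta,q}=O(|y|)U_\delta$ since $R_{\tilde g_q}(q)=0$ by \eqref{eq:Ricci}. The remaining pieces are of strictly higher order: the $h_{\tilde g_q}$-term in the boundary residual is controlled by \eqref{eq:hij}, the quadratic Taylor remainder $f(W_{\delta,q}+\delta^2V_{\delta,q})-f(W_{\delta,q})-f'(W_{\delta,q})[\delta^2V_{\delta,q}]$ is $O\big(W_{\delta,q}^{(4-n)/(n-2)}(\delta^2V_{\delta,q})^2\big)$, and the part of $\delta^2L_{\tilde g_q}V_{\delta,q}$ beyond $-\delta^2\Delta V_{\delta,q}$ is handled through the decay estimate \eqref{eq:gradvq} for $v_q$.

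Consequently, after rescaling, the interior residual is a finite sum of terms of the schematic form $\delta^{-(n-4)/2}|x|^{2}\nabla U(x)$, $\delta^{-(n-4)/2}|x|^{3}\partial^2 U(x)$, $\delta^{-(n-4)/2}|x|\,U(x)$ (plus strictly higher-order terms), each carrying a profile decaying like $(1+|x|)^{3-n}$, whose $L^{2n/(n+2)}(M)$-norm is $O(\delta^3)$; and the boundary residual is a sum of terms carrying the factor $\delta^{(8-n)/2}$ times a profile built from $U(0,z)$ and $v_q(0,z)$, whose $L^{2(n-1)/n}(\partial M)$-norm is $O(\delta^4)=o(\delta^3)$. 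Each of these computations uses the integrability of the rescaled profile in the relevant Lebesgue space, which is where the restriction on the dimension enters. Combining the interior estimate, the boundary estimate and the $O(\varepsilon\delta)$ linear piece, and noting that all the expansions above hold $C^0$-uniformly in $q\in\partial M$, we obtain $\|R\|_g=O(\varepsilon\delta+\delta^3)$ uniformly in $q$. I expect the main obstacle to be precisely the bookkeeping just outlined: keeping track of all the terms produced by the four-jet expansion \eqref{eq:gij}, verifying the two key cancellations (the quadratic curvature term against $\delta^2V_{\delta,q}$, and the collapse of the would-be $O(|y|)$ drift to $O(|y|^2)$ via the Ricci conditions \eqref{eq:Ricci}), and checking that every surviving profile belongs to the appropriate $L^p$ space.
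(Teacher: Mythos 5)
Your proposal is essentially the paper's own argument: the paper likewise tests $R$ (via the scalar product defining $i_g^*$, which amounts to the duality pairing you use) and reduces the estimate to $L^{2n/(n+2)}$ and $L^{2(n-1)/n}$ bounds on the interior and boundary residuals, with the same two key cancellations — the quadratic curvature coefficient against $\delta^2\Delta V_{\delta,q}$ (by the defining equation \eqref{eq:vqdef} of $v_q$), and the collapse of the would-be $O(|y|)$ drift to $O(|y|^2)$ via the Ricci identities \eqref{eq:Ricci} — and the same use of umbilicity for the $h_{\tilde g_q}$ term.

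One small inaccuracy worth flagging: you attribute the $\varepsilon\delta$ in the statement to the linear piece $\tilde\Pi^{\bot}\,i_g^*\bigl(\varepsilon\tilde\gamma(\tilde W_{\delta,q}+\delta^2\tilde V_{\delta,q})\bigr)$, but by the paper's definition \eqref{eq:defR} that term is \emph{not} part of $R$; it is carried separately in the fixed-point equation for $\tilde\Phi$. Indeed, the paper's proof never produces an $\varepsilon$-dependent contribution — every estimate is $O(\delta^3)$ or $o(\delta^3)$ — so the $\varepsilon\delta$ in the lemma is simply a generous upper bound. Also, you invoke only $R_{\tilde g_q}(q)=0$ to conclude $R_{\tilde g_q}=O(|y|)$, whereas \eqref{eq:Rii} actually gives the stronger $R_{\tilde g_q}(y)=O(|y|^2)$; your weaker bound still yields a profile $\sim(1+|x|)^{3-n}$ and the needed $O(\delta^3)$ for $n\ge10$, so this is not a gap, merely suboptimal.
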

\begin{proof}
We recall that there is a unique $\Gamma$ such that 
\[
\Gamma=i_{g}^{*}\left(f(\tilde{W}_{\delta,q}+\delta^{2}\tilde{V}_{\delta,q})\right),
\]
that is, according to (\ref{eq:istella}) equivalent to say that there
exists a unique $\Gamma$ solving 
\[
\left\{ \begin{array}{ccc}
-\Delta_{g}\Gamma+\frac{n-2}{4(n-1)}R_{g}\Gamma=0 &  & \text{on }M;\\
\frac{\partial\Gamma}{\partial\nu}+\frac{n-2}{2}h_{g}\Gamma=(n-2)\left((\tilde{W}_{\delta,q}+\delta^{2}\tilde{V}_{\delta,q})^{+}\right)^{\frac{n}{n-2}} &  & \text{on \ensuremath{\partial}}M.
\end{array}\right.
\]
By definition of $i_{g}^{*}$ we have that 
\begin{align*}
\|R\|_{g}^{2}= & \|\Gamma-\tilde{W}_{\delta,q}-\delta^{2}\tilde{V}_{\delta,q}\|_{g}^{2}\\
= & \int_{M}-\left[L_{g}(\Gamma-\tilde{W}_{\delta,q}-\delta^{2}\tilde{V}_{\delta,q})\right](\Gamma-\tilde{W}_{\delta,q}-\delta^{2}\tilde{V}_{\delta,q})d\mu_{g}\\
 & +\int_{\partial M}\left[B_{g}(\Gamma-\tilde{W}_{\delta,q}-\delta^{2}\tilde{V}_{\delta,q})\right](\Gamma-\tilde{W}_{\delta,q}-\delta^{2}\tilde{V}_{\delta,q})d\sigma_{g}\\
= & \int_{M}\left[-L_{g}(\tilde{W}_{\delta,q}+\delta^{2}\tilde{V}_{\delta,q})\right]Rd\mu_{g}\\
 & +\int_{\partial M}\left[B_{g}(\Gamma)-B_{g}(\tilde{W}_{\delta,q}+\delta^{2}\tilde{V}_{\delta,q})\right]Rd\sigma_{g}\\
= & \int_{M}\left[-L_{\tilde{g}_{q}}(W_{\delta,q}+\delta^{2}V_{\delta,q})\right]\Lambda_{q}^{-1}Rd\mu_{\tilde{g}_{q}}\\
 & +\int_{\partial M}\left[B_{\tilde{g}_{q}}(\Lambda_{q}^{-1}\Gamma)-B_{\tilde{g}_{q}}(W_{\delta,q}+\delta^{2}V_{\delta,q})\right]\Lambda_{q}^{-1}Rd\sigma_{\tilde{g}_{q}}
\end{align*}
We estimate 
\begin{align*}
\int_{M}\left[\Delta_{\tilde{g}_{q}}(W_{\delta,q}+\delta^{2}V_{\delta,q})\right]\Lambda_{q}^{-1}Rd\mu_{\tilde{g}_{q}} & \le\|\Delta_{\tilde{g}_{q}}(W_{\delta,q}+\delta^{2}V_{\delta,q})\|_{L^{\frac{2n}{n+2}}(M,\tilde{g}_{q})}\|\Lambda_{q}^{-1}R\|_{\tilde{g}_{q}}\\
 & \le O(\delta^{3})\|R\|_{g}
\end{align*}
In fact, recalling the expression fot Laplace Beltrami operator in
local charts 
\begin{align*}
\Delta_{\tilde{g}_{q}} & =\Delta_{\text{euc}}+[\tilde{g}_{q}^{ij}(y)-\delta_{ij}]\partial_{ij}^{2}\\
 & +\left[\partial_{i}\tilde{g}_{q}^{ij}(y)+\frac{\tilde{g}_{q}^{ij}(y)\partial_{i}|\tilde{g}_{q}|^{\frac{1}{2}}(y)}{|\tilde{g}_{q}|^{\frac{1}{2}}(y)}\right]\partial_{j}+\frac{\partial_{n}|\tilde{g}_{q}|^{\frac{1}{2}}(y)}{|\tilde{g}_{q}|^{\frac{1}{2}}(y)}\partial_{n}
\end{align*}
where $i,k=1,\dots,n-1$ and $\Delta_{\text{euc}}$ is the euclidean
Laplacian, by (\ref{eq:|g|}) and (\ref{eq:gij}) and since $\Delta_{\text{euc}}U=0$,
in variables $y=\delta x$ we have 
\begin{align}
\Delta_{\tilde{g}_{q}}W_{\delta,q} & =\frac{1}{\delta^{\frac{n-2}{2}}}\frac{1}{3}\left(\bar{R}_{ikjl}x_{k}x_{l}+R_{ninj}x_{n}^{2}+\delta O(|x|^{3})\right)\partial_{ij}^{2}\left(U(x)\chi(\delta x)\right)\nonumber \\
 & +\frac{1}{\delta^{\frac{n-2}{2}}}\frac{1}{3}\left(\bar{R}_{iijl}x_{l}+\bar{R}_{ikji}x_{k}+\delta O(|x|^{2})\right)\partial_{j}\left(U(x)\chi(\delta x)\right)\nonumber \\
 & +\frac{1}{\delta^{\frac{n-2}{2}}}\delta^{2}O(|x|^{3})\partial_{n}\left(U(x)\chi(\delta x)\right)\label{eq:DW}
\end{align}
We remark that, by symmetry, $\bar{R}_{iijl}=0$ and by \cite[Prop 3.2 (2)]{M1}
also $\bar{R}_{ikji}=-\bar{R}_{jk}=0$. Now, the definition of $v_{q}$
(\ref{eq:vqdef}) is crucial to get
\begin{align*}
\delta^{2}\Delta_{\tilde{g}_{q}}V_{\delta,q} & =\frac{1}{\delta^{\frac{n-2}{2}}}\Delta_{\text{euc}}\left(v_{q}(x)\chi(\delta x)\right)+\frac{\delta^{2}}{\delta^{\frac{n-2}{2}}}\left(O(|x|^{2})\partial_{ij}v_{q}+O(|x|)\partial_{j}v_{q}\right)\\
 & =\frac{1}{\delta^{\frac{n-2}{2}}}\left(-\frac{1}{3}\left(\bar{R}_{ikjl}x_{k}x_{l}+R_{ninj}x_{n}^{2}\right)\partial_{ij}^{2}\left(U(x)\chi(\delta x)\right)+\delta O(|x|^{3})\right)
\end{align*}
Thus, this term cancels the first order term in (\ref{eq:DW}) and
we get 
\begin{equation}
\|\Delta_{\tilde{g}_{q}}(W_{\delta,q}+\delta^{2}V_{\delta,q})\|_{L^{\frac{2n}{n+2}}(M)}=O\left(\delta^{n\frac{n+2}{2n}}\frac{1}{\delta^{\frac{n-2}{2}}}\delta\right)=O(\delta^{3})\label{eq:deltaw+v}
\end{equation}

Also we have
\begin{align}
\int_{M}R_{\tilde{g}_{q}}(W_{\delta,q}+\delta^{2}V_{\delta,q})\Lambda_{q}^{-1}Rd\mu_{\tilde{g}} & \le c\|R_{\tilde{g}_{q}}(W_{\delta,q}+\delta^{2}V_{\delta,q})\|_{L^{\frac{2n}{n+2}}(M,\tilde{g}_{q})}\|\Lambda_{q}^{-1}R\|_{L^{\frac{2n}{n-2}}(M,\tilde{g}_{q})}\nonumber \\
 & \le o(\delta^{3})\|R\|_{g}\label{eq:L2}
\end{align}
In fact, since $R_{\tilde{g}_{q}}(q)=R_{\tilde{g}_{q},i}(q)=R_{\tilde{g}_{q},t}(q)=0$
(see \cite{M1}), by the decay
of $U$ and since $|v_{q}(y)|\le C|y|^{4-n}$ we have, in local coordinates
\begin{multline*}
\|R_{\tilde{g}_{q}}(W_{\delta,q}+\delta^{2}V_{\delta,q})\|_{L^{\frac{2n}{n+2}}(M,\tilde{g}_{q})}\le O(\delta^{2})\left\{ \int_{\mathbb{R}_{+}^{n}}\left[R_{\tilde{g}_{q}}(\delta y)U(y)\chi(\delta y)\right]^{\frac{2n}{n+2}}dy\right\} ^{\frac{n+2}{2n}}\\
+O(\delta^{4})\left\{ \int_{\mathbb{R}_{+}^{n}}\left[R_{\tilde{g}_{q}}(\delta y)v_{q}(y)\chi(\delta y)\right]^{\frac{2n}{n+2}}dy\right\} ^{\frac{n+2}{2n}}\\
=o(\delta^{3})\left\{ \int_{\mathbb{R}_{+}^{n}}\left[y^{2}U(y)\right]^{\frac{2n}{n+2}}dy\right\} ^{\frac{n+2}{2n}}+O(\delta^{4})\left\{ \int_{\mathbb{R}_{+}^{n}}\left[v_{q}(y)\right]^{\frac{2n}{n+2}}dy\right\} ^{\frac{n+2}{2n}}+o(\delta^{3})\\
=o(\delta^{3})\text{ since }n>10.
\end{multline*}
For the boundary term we have
\begin{multline*}
B_{\tilde{g}_{q}}(\Lambda_{q}^{-1}\Gamma)-B_{\tilde{g}_{q}}(W_{\delta,q}+\delta^{2}V_{\delta,q})\\
=\left\{ (n-2)\left((W_{\delta,q}+\delta^{2}V_{\delta,q})^{+}\right)^{\frac{n}{n-2}}-(n-2)\left(W_{\delta,q}^{+}\right)^{\frac{n}{n-2}}-\delta^{2}\frac{\partial V_{\delta,q}}{\partial\nu}\right\} \\
+\left\{ (n-2)\left(W_{\delta,q}^{+}\right)^{\frac{n}{n-2}}-\frac{\partial W_{\delta,q}}{\partial\nu}\right\} -\frac{n-2}{2}h_{\tilde{g}}(W_{\delta,q}+\delta^{2}V_{\delta,q})
\end{multline*}
Since the boundary is umbilic, we have $h_{\tilde{g}_{q}}(q)=h_{\tilde{g}_{q},i}(q)=h_{\tilde{g}_{q},ik}(q)=0$
so we estimate
\begin{align}
\int_{\partial M}h_{\tilde{g}_{q}}(W_{\delta,q}+\delta^{2}V_{\delta,q})\Lambda_{q}^{-1}Rd\sigma_{\tilde{g}_{q}} & \le c\|h_{\tilde{g}_{q}}(W_{\delta,q}+\delta^{2}V_{\delta,q})\|_{L^{\frac{2(n-1)}{n+2}}(M,\tilde{g}_{q})}\|\Lambda_{q}^{-1}R\|_{\tilde{g}_{q}}\nonumber \\
 & =O(\delta^{4})\|R\|_{g}\label{eq:hresto1}
\end{align}
 Indeed, as in (\ref{eq:L2})
\begin{multline}
\|h_{\tilde{g}_{q}}(W_{\delta,q}+\delta^{2}V_{\delta,q})\|_{L^{\frac{2(n-1)}{n+2}}(\partial M,\tilde{g}_{q})}\le\|h_{\tilde{g}_{q}}W_{\delta,q}\|_{L^{\frac{2(n-1)}{n+2}}(\partial M,\tilde{g}_{q})}+\delta^{2}\|V_{\delta,q}\|_{L^{\frac{2(n-1)}{n+2}}(\partial M,\tilde{g}_{q})}\\
=\left\{ \int_{\mathbb{R}^{n-1}}\left[h_{\tilde{g}_{q}}(0,\delta z)U(0,z)\chi(0,\delta z)\right]^{\frac{2(n-1)}{n+2}}dz\right\} ^{\frac{n+2}{2(n-1)}}\\
+\delta^{2}\left\{ \int_{\mathbb{R}^{n-1}}\left[h_{\tilde{g}_{q}}(0,\delta z)v_{q}(0,z)\chi(0,\delta z)\right]^{\frac{2(n-1)}{n+2}}dz\right\} ^{\frac{n+2}{2(n-1)}}\\
=o(\delta^{3})\left\{ \int_{\mathbb{R}^{n-1}}\left[z^{2}U(0,z)\right]^{\frac{2(n-1)}{n+2}}dz\right\} ^{\frac{n+2}{2(n-1)}}+O(\delta^{4})\left\Vert v_{q}(0,z)\right\Vert _{L^{\frac{2(n-1)}{n+2}}(\mathbb{R}^{n-1})}+o(\delta^{3})\\
=O(\delta^{4})\text{ since }n>10.\label{eq:hresto2}
\end{multline}
Here we considered $y=(t,z)$ with $t>0$ and $z\in\mathbb{R}^{n-1}$.

Easily we get
\[
\left\Vert (n-2)W_{\delta,q}^{\frac{n}{n-2}}-\frac{\partial}{\partial\nu}W_{\delta,q}\right\Vert _{L^{\frac{2(n-1)}{n}}(\partial M)}=O(\delta^{3})
\]
since $U$ solves (\ref{eq:Udelta}).

For the last term we estimate 
\begin{multline*}
\int_{\partial M}\left\{ (n-2)\left[\left((W_{\delta,q}+\delta^{2}V_{\delta,q})^{+}\right)^{\frac{n}{n-2}}
-W_{\delta,q}^{\frac{n}{n-2}}\right]-\delta^{2}\frac{\partial V_{\delta,q}}{\partial\nu}\right\}
 \Lambda_q^{-1}Rd\sigma_{\tilde{g}_{q}}\\
\le c\left\Vert (n-2)\left[\left((W_{\delta,q}+\delta^{2}V_{\delta,q})^{+}\right)^{\frac{n}{n-2}}
-W_{\delta,q}^{\frac{n}{n-2}}\right]-\delta^{2}
\frac{\partial V_{\delta,q}}{\partial\nu}\right\Vert _{L^{\frac{2(n-1)}{n}}(\partial M,\tilde{g}_{q})}\|R\|_{g}
\end{multline*}
and, by Taylor expansion and by definition of the function $v_{q}$
(see (\ref{eq:vqdef}) ) 
\begin{multline*}
\left\Vert (n-2)\left[\left((W_{\delta,q}+\delta^{2}V_{\delta,q})^{+}\right)^{\frac{n}{n-2}}-W_{\delta,q}^{\frac{n}{n-2}}\right]-\delta^{2}\frac{\partial V_{\delta,q}}{\partial\nu}\right\Vert _{L^{\frac{2(n-1)}{n}}(\partial M,\tilde{g}_{q})}\\
\le\left\Vert (n-2)\left[\left((U+\delta^{2}v_{q})^{+}\right)^{\frac{n}{n-2}}-U^{\frac{n}{n-2}}\right]+\delta^{2}\frac{\partial v_{q}}{\partial t}\right\Vert _{L^{\frac{2(n-1)}{n}}(\partial\mathbb{R}_{+}^{n})}+o(\delta^{3})\\
\le\delta^{2}\left\Vert n\left((U+\theta\delta^{2}v_{q})^{+}\right)^{\frac{2}{n-2}}v_{q}+\frac{\partial v_{q}}{\partial t}\right\Vert _{L^{\frac{2(n-1)}{n}}(\partial\mathbb{R}_{+}^{n})}+o(\delta^{3})\\
=\delta^{2}n\left\Vert \left((U+\theta\delta^{2}v_{q})^{+}\right)^{\frac{2}{n-2}}v_{q}-U^{\frac{2}{n-2}}v_{q}\right\Vert _{L^{\frac{2(n-1)}{n}}(\partial\mathbb{R}_{+}^{n})}+o(\delta^{3}).
\end{multline*}
We observe that, chosen a large positive $R$, we have $U+\theta\delta^{2}v_{q}>0$
in $B(0,R)$ for some $\delta$. Moreover, on the complementary of
this ball, we have $\frac{c}{|y|^{n-2}}\le U(y)\le\frac{C}{|y|^{n-2}}$
and $|v_{q}|\le\frac{C_{1}}{|y|^{n-4}}$ for some positive constants
$c,C,C_{1}$. So it is possible to prove that, for $\delta$ small
enough, $U+\theta\delta^{2}v_{q}>0$ if $|y|\le1/\delta$. At this
point 
\begin{multline*}
\int_{\partial\mathbb{R}_{+}^{n}}\left[\left|\left((U+\theta\delta^{2}v_{q})^{+}\right)^{\frac{2}{n-2}}-U^{\frac{2}{n-2}}\right||v_{q}|\right]^{\frac{2(n-1)}{n}}\\
=\int_{U+\theta\delta v_{q}>0}\left[\left|\left((U+\theta\delta^{2}v_{q})^{+}\right)^{\frac{2}{n-2}}-U^{\frac{2}{n-2}}\right||v_{q}|\right]^{\frac{2(n-1)}{n}}dz\\
+\int_{U+\theta\delta v_{q}\le0}\left[\left|\left((U+\theta\delta^{2}v_{q})^{+}\right)^{\frac{2}{n-2}}-U^{\frac{2}{n-2}}\right||v_{q}|\right]^{\frac{2(n-1)}{n}}dz\\
=\delta^{\frac{4(n-1)}{n}}\int_{U+\theta\delta v_{q}>0}\left(U+\theta_{1}\delta^{2}v_{q}\right)^{\frac{-2(n-1)(n-4)}{n(n-2)}}|v_{q}|^{\frac{4(n-1)}{n}}dz\\
+\int_{U+\theta\delta v_{q}\le0}U^{\frac{4(n-1)}{n(n-2)}}|v_{q}|^{\frac{2(n-1)}{n}}dz\\
\le\delta^{\frac{4(n-1)}{n}}\int_{U+\theta\delta v_{q}>0}\left(U+\theta_{1}\delta^{2}v_{q}\right)^{\frac{-2(n-1)(n-4)}{n(n-2)}}|v_{q}|^{\frac{4(n-1)}{n}}dz\\
+\int_{|z|>\frac{1}{\delta}}U^{\frac{4(n-1)}{n(n-2)}}|v_{q}|^{\frac{2(n-1)}{n}}dz
\end{multline*}
and, since $n>10$ one can check that $\int_{U+\theta\delta v_{q}>0}\left(U+\theta_{1}\delta^{2}v_{q}\right)^{\frac{-2(n-1)(n-4)}{n(n-2)}}|v_{q}|^{\frac{4(n-1)}{n}}dz$ 
is bounded and that 
\begin{align*}
\int_{|z|>\frac{1}{\delta}}U^{\frac{4(n-1)}{n(n-2)}}|v_{q}|^{\frac{2(n-1)}{n}}dz & \le C\int_{|z|>\frac{1}{\delta}}\frac{1}{|z|^{\frac{4(n-1)}{n}}}\frac{1}{|z|^{\frac{2(n-1)(n-4)}{n}}}dz\\
 & \le C\int_{\frac{1}{\delta}}^{\infty}r^{-\frac{(n-2)^{2}}{n}}=O(\delta^{\frac{(n-2)^{2}-1}{n}})=o(\delta^{\frac{4(n-1)}{n}})
\end{align*}
thus $\left\Vert (n-2)\left[\left((W_{\delta,q}+\delta^{2}V_{\delta,q})^{+}\right)^{\frac{n}{n-2}}-W_{\delta,q}^{\frac{n}{n-2}}\right]-\delta^{2}\frac{\partial V_{\delta,q}}{\partial\nu}\right\Vert _{L^{\frac{2(n-1)}{n}}(\partial M,\tilde{g}_{q})}=O(\delta^{4})$
and 
\begin{multline*}
\int_{\partial M}\left\{ (n-2)\left[\left((W_{\delta,q}+\delta^{2}V_{\delta,q})^{+}\right)^{\frac{n}{n-2}}-W_{\delta,q}^{\frac{n}{n-2}}\right]-\delta\frac{\partial V_{\delta,q}}{\partial\nu}\right\}\Lambda_q^{-1} Rd\sigma_{\tilde{g}_{q}}\\
\le O(\delta^{4})\|R\|_{g}.
\end{multline*}
\end{proof}
At this point we can can use the same strategy of proposition 11 of
\cite{GMP} to prove the following result
\begin{prop}
There exists a positive constant $C$ such that for $\varepsilon,\delta$
small, for any $q\in\partial M$ there exists a unique $\tilde{\Phi}=\tilde{\Phi}_{\varepsilon,\delta,q}\in\tilde{K}_{\delta,q}^{\bot}$
which solves (\ref{eq:Pibot}) such that 
\[
\|\tilde{\Phi}\|_{g}=\|\Lambda_{q}\Phi\|_{g}\le C(\varepsilon\delta+\delta^{3}).
\]
\end{prop}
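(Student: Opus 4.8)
The statement is the standard "solve the auxiliary equation" step of the Ljapunov–Schmidt scheme, so the proof is a fixed-point argument in $\tilde K_{\delta,q}^\bot$. Rewriting \eqref{eq:Pibot} as in the excerpt, I want to solve
\[
L(\tilde\Phi)=N(\tilde\Phi)+R-\tilde\Pi^\bot\left\{i_g^*\left(\varepsilon\gamma(\tilde W_{\delta,q}+\delta^2\tilde V_{\delta,q}+\tilde\Phi)\right)\right\}=:\mathcal T(\tilde\Phi),
\]
and the plan is to show that $L$ is invertible on $\tilde K_{\delta,q}^\bot$ with inverse bounded uniformly in $\varepsilon,\delta,q$, and then that $L^{-1}\circ\mathcal T$ is a contraction on a small ball $\{\|\tilde\Phi\|_g\le C(\varepsilon\delta+\delta^3)\}$.

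First I would establish the \emph{uniform invertibility of $L$}. This is exactly the linear-theory step of \cite[Proposition 11]{GMP}: because $\Lambda_q$ is an isometry for $\langle\langle\cdot,\cdot\rangle\rangle_g$, one works with the pulled-back operator on $\tilde g_q$, and since $\tilde W_{\delta,q}+\delta^2\tilde V_{\delta,q}$ is a small perturbation of the bubble $W_{\delta,q}$, the claim reduces to the non-degeneracy of $U$ — i.e. the fact, recalled after \eqref{eq:linearizzato}–\eqref{eq:sol-linearizzato}, that the kernel of the limit linearized operator is spanned by $j_1,\dots,j_n$, which are precisely the directions we have projected out in defining $\tilde K_{\delta,q}^\bot$. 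A Fredholm/blow-up contradiction argument (if $\|L(\tilde\Phi_k)\|\to0$ with $\|\tilde\Phi_k\|=1$, rescale, pass to the limit, get a nontrivial kernel element orthogonal to all $j_b$, contradiction) gives a constant $C_0$, independent of $\varepsilon,\delta,q$, with $\|L(\tilde\Phi)\|_g\ge C_0\|\tilde\Phi\|_g$; surjectivity follows since $L$ is a compact perturbation of the identity on $\tilde K_{\delta,q}^\bot$. Hence $\|L^{-1}\|\le C_0^{-1}$ uniformly.

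Next I would \emph{estimate $\mathcal T$}. The term $R$ is $O(\varepsilon\delta+\delta^3)$ by Lemma \ref{lem:R}. For the linear-in-$\varepsilon$ term, $\|\tilde\Pi^\bot i_g^*(\varepsilon\tilde\gamma(\tilde W_{\delta,q}+\delta^2\tilde V_{\delta,q}))\|_g\le c\varepsilon\|\tilde\gamma W_{\delta,q}\|_{L^{2(n-1)/n}(\partial M)}+O(\varepsilon\delta^2)=O(\varepsilon\delta)$ using the decay of $U$ and $|v_q(0,z)|\le C|z|^{4-n}$ (here $n\ge11$ makes the relevant integrals converge), together with the boundedness of $i_g^*:L^{2(n-1)/n}(\partial M)\to H^1(M)$ and of $\gamma,\Lambda_q^{-2/(n-2)}$; the piece $\varepsilon\gamma\tilde\Phi$ contributes $O(\varepsilon)\|\tilde\Phi\|_g$. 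For $N$, the superlinearity of $f(v)=(n-2)(v^+)^{n/(n-2)}$ gives (as in \cite[Prop.~11]{GMP}) $\|N(\tilde\Phi_1)-N(\tilde\Phi_2)\|_g\le c\,(\|\tilde\Phi_1\|_g+\|\tilde\Phi_2\|_g)^{\min\{1,2/(n-2)\}}\|\tilde\Phi_1-\tilde\Phi_2\|_g$ and $\|N(\tilde\Phi)\|_g\le c\|\tilde\Phi\|_g^{1+\min\{1,2/(n-2)\}}$. Combining, on the ball $B_\rho=\{\|\tilde\Phi\|_g\le\rho\}$ with $\rho=\bar C(\varepsilon\delta+\delta^3)$ one gets $\|L^{-1}\mathcal T(\tilde\Phi)\|_g\le C_0^{-1}\big(O(\varepsilon\delta+\delta^3)+O(\varepsilon)\rho+o(1)\rho\big)\le\rho$ for $\varepsilon,\delta$ small and $\bar C$ large, and likewise $L^{-1}\mathcal T$ is a contraction on $B_\rho$ because its Lipschitz constant is $C_0^{-1}(O(\varepsilon)+o(1))<1$. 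Banach's fixed point theorem then yields a unique $\tilde\Phi=\tilde\Phi_{\varepsilon,\delta,q}\in\tilde K_{\delta,q}^\bot$ solving \eqref{eq:Pibot} with $\|\tilde\Phi\|_g\le C(\varepsilon\delta+\delta^3)$; uniqueness in the whole space (not just in $B_\rho$) follows by a standard a posteriori argument bounding any solution.

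The \textbf{main obstacle} is the uniform invertibility of $L$: one must make the blow-up/contradiction argument genuinely uniform in the base point $q\in\partial M$ and check that in the limit the error functions $\delta^2\tilde V_{\delta,q}$ and the cut-off $\chi$ do not contribute kernel elements — this is where the decay estimate \eqref{eq:gradvq} on $v_q$ from Lemma \ref{lem:vq} is used. Everything else is a routine adaptation of \cite[Proposition 11]{GMP}, the only new feature being that Lemma \ref{lem:R} already packages the higher-order metric expansion forced by umbilicity, so the size of $R$ is the same $O(\varepsilon\delta+\delta^3)$ as one needs.
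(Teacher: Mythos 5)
Your proposal is correct and is essentially the same argument the paper invokes: the paper explicitly defers this step to the strategy of Proposition~11 of \cite{GMP}, which is precisely the uniform-invertibility-of-$L$ plus contraction-mapping scheme you lay out, with the only new input being the $O(\varepsilon\delta+\delta^3)$ bound on $R$ from Lemma~\ref{lem:R}. Your sketch correctly identifies that bound (and the decay of $v_q$ from Lemma~\ref{lem:vq}) as the point where the umbilic-boundary modifications enter, so the proposal matches the paper's route.
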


\section{The reduced functional}\label{reduced}

In this section we perform the expansion of the functional with respect
to the parameter $\varepsilon,\delta$.
\begin{lem}
\label{lem:JWpiuPhi}Assume $n\ge10$. It holds 
\[
J_{\varepsilon,g}(\tilde{W}_{\delta,q}+\delta^{2}\tilde{V}_{\delta,q}+\tilde{\Phi})-J_{\varepsilon,g}(\tilde{W}_{\delta,q}+\delta^{2}\tilde{V}_{\delta,q})=O\left((\delta^{2}+\varepsilon\delta)\|\tilde{\Phi}\|_{g}+\|\tilde{\Phi}\|_{g}^{2}\right)
\]
$C^{0}$-uniformly for $q\in\partial M$.
\end{lem}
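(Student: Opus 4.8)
The plan is to expand $J_{\varepsilon,g}$ around the approximate solution $\tilde U_{\delta,q}:=\tilde W_{\delta,q}+\delta^{2}\tilde V_{\delta,q}$ via a second-order Taylor expansion in the increment $\tilde\Phi$, and then estimate each term using the size of $\tilde\Phi$ together with the $H^1$-bounds already at our disposal. Concretely, I would write
\[
J_{\varepsilon,g}(\tilde U_{\delta,q}+\tilde\Phi)-J_{\varepsilon,g}(\tilde U_{\delta,q})
=J_{\varepsilon,g}'(\tilde U_{\delta,q})[\tilde\Phi]
+\tfrac12 J_{\varepsilon,g}''(\tilde U_{\delta,q})[\tilde\Phi,\tilde\Phi]
+o(\|\tilde\Phi\|_g^2),
\]
where the remainder of the Taylor expansion of the nonlinear term $\int_{\partial M}((v^+)^{\frac{2(n-1)}{n-2}})$ is controlled by the standard pointwise inequality for the power nonlinearity; since $\frac{2(n-1)}{n-2}>2$ when $n<\infty$ but the convexity-type remainder estimate on a compact boundary gives $o(\|\tilde\Phi\|_g^2)$ (this is exactly the estimate used in the analogue Lemma in \cite{GMP}). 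So the only two terms to bound are the linear and the quadratic one.

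For the quadratic term, $J_{\varepsilon,g}''(\tilde U_{\delta,q})$ is the bilinear form whose associated operator is, up to the compact perturbations $\varepsilon\gamma$ and the lower order curvature terms, exactly the operator $L$ of \eqref{eq:defL} read through the isometry $\Lambda_q$; hence it is bounded on $H^1(M)$ uniformly in the parameters, and $|J_{\varepsilon,g}''(\tilde U_{\delta,q})[\tilde\Phi,\tilde\Phi]|\le C\|\tilde\Phi\|_g^2$. For the linear term I would use that $\tilde\Phi\in\tilde K_{\delta,q}^\bot$ and the very definition of $i_g^*$ in \eqref{eq:istella}: testing the equation defining $\Gamma=i_g^*(f(\tilde U_{\delta,q}))$ against $\tilde\Phi$, and using $\langle\langle\tilde\Phi,\Lambda_qZ^b_{\delta,q}\rangle\rangle_g=0$, one gets
\[
J_{\varepsilon,g}'(\tilde U_{\delta,q})[\tilde\Phi]
=\langle\langle \tilde U_{\delta,q}-\Gamma,\tilde\Phi\rangle\rangle_g
+\varepsilon\int_{\partial M}\gamma\,\tilde U_{\delta,q}\tilde\Phi\,d\sigma_g
=\langle\langle -R,\tilde\Phi\rangle\rangle_g+O(\varepsilon\delta\|\tilde\Phi\|_g),
\]
using that $\tilde\Phi$ is orthogonal to $\tilde K_{\delta,q}$ so that $\langle\langle\tilde U_{\delta,q}-\Gamma,\tilde\Phi\rangle\rangle_g=\langle\langle \tilde\Pi^\bot(\tilde U_{\delta,q}-\Gamma),\tilde\Phi\rangle\rangle_g=-\langle\langle R,\tilde\Phi\rangle\rangle_g$ by \eqref{eq:defR}, and that $\|\varepsilon\gamma\tilde U_{\delta,q}\|$ in the appropriate dual norm is $O(\varepsilon\delta)$ since $\|\tilde W_{\delta,q}\|_{L^{\frac{2(n-1)}{n}}(\partial M)}=O(\delta)$ and $\delta^2\|\tilde V_{\delta,q}\|$ is of lower order. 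By Lemma \ref{lem:R}, $\|R\|_g=O(\varepsilon\delta+\delta^3)=O(\delta^2+\varepsilon\delta)$ for $\delta$ small, so the linear term is $O((\delta^2+\varepsilon\delta)\|\tilde\Phi\|_g)$, which together with the quadratic bound yields the claim.

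The step I expect to be the main technical nuisance is the control of the Taylor remainder of the boundary nonlinearity, i.e. checking that $\int_{\partial M}\big[(u^+ )^{p}-(\tilde U_{\delta,q}^+)^{p}-p(\tilde U_{\delta,q}^+)^{p-1}\tilde\Phi-\tfrac{p(p-1)}2(\tilde U_{\delta,q}^+)^{p-2}\tilde\Phi^2\big]d\sigma_g=o(\|\tilde\Phi\|_g^2)$ with $p=\frac{2(n-1)}{n-2}$, uniformly in $q$: one splits according to whether $2<p\le 3$ or $p>3$ (equivalently $n\ge 4$ vs small $n$, here always $n\ge 10$ so $p=\frac{2(n-1)}{n-2}\in(2,3)$), and uses $|a+b|^{p}-|a|^{p}-p|a|^{p-1}b\,\mathrm{sign}(a)\le C|b|^{p}$ together with $H^1(M)\hookrightarrow L^{p}(\partial M)$ to bound the leftover by $C\|\tilde\Phi\|_{L^p(\partial M)}^{p}=o(\|\tilde\Phi\|_g^2)$ since $p>2$ and $\|\tilde\Phi\|_g\to 0$. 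Once these ingredients are in place, summing the contributions gives exactly $O((\delta^2+\varepsilon\delta)\|\tilde\Phi\|_g+\|\tilde\Phi\|_g^2)$, $C^0$-uniformly in $q\in\partial M$, and the proof follows verbatim the scheme of the corresponding lemma in \cite{GMP}.
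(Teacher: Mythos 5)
Your proof is correct, and it differs from the paper's argument in one genuinely useful way. Both proofs begin with a second-order Taylor expansion of $J_{\varepsilon,g}$ around $\tilde U_{\delta,q}=\tilde W_{\delta,q}+\delta^2\tilde V_{\delta,q}$ and bound the quadratic term by $C\|\tilde\Phi\|_g^2$. For the linear term, however, the paper does not invoke the orthogonality $\tilde\Phi\in\tilde K_{\delta,q}^\perp$ at all: it writes out $J'_{\varepsilon,g}(\tilde U_{\delta,q})[\tilde\Phi]$ in full, integrates by parts, and re-estimates each piece (the Laplacian of $W_{\delta,q}+\delta^2 V_{\delta,q}$, the curvature term, the mean curvature term, the normal derivatives, the boundary nonlinearity) essentially repeating the computations of Lemma~\ref{lem:R} in a duality-pairing form. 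You instead observe the structural identity
\[
J'_{\varepsilon,g}(\tilde U_{\delta,q})[\tilde\Phi]
=\langle\langle \tilde U_{\delta,q}-\Gamma,\tilde\Phi\rangle\rangle_g
+\varepsilon\int_{\partial M}\gamma\,\tilde U_{\delta,q}\tilde\Phi\,d\sigma_g,
\qquad \Gamma=i_g^*(f(\tilde U_{\delta,q})),
\]
use $\tilde\Phi\perp\tilde K_{\delta,q}$ to replace $\tilde U_{\delta,q}-\Gamma$ by its projection $-R$, and then cite Lemma~\ref{lem:R} directly. This is shorter and more modular (it makes explicit that the linear term is controlled by the very remainder $R$ already estimated), whereas the paper's route is self-contained but redundant. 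One caveat in your version: you expand with $J''(\tilde U_{\delta,q})$ plus an $o(\|\tilde\Phi\|_g^2)$ remainder, which requires the Hölder-continuity argument you sketch for the power $p=\frac{2(n-1)}{n-2}\in(2,3)$; the paper sidesteps this entirely by using the Lagrange form $\frac12 J''(\tilde U_{\delta,q}+\theta\tilde\Phi)[\tilde\Phi,\tilde\Phi]$, which is exact and only needs $J\in C^2$ — if you adopt that form your argument becomes a clean improvement on the paper's proof with no extra remainder analysis needed.
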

The proof of this Lemma is postponed to the appendix. 

We recall here an useful result contained in \cite{M1}.
\begin{rem}
\label{rem:I}It holds
\begin{align*}
I_{1}:= & \int_{\mathbb{R}_{+}^{n}}\frac{t^{2}}{\left((1+t)^{2}+|z|^{2}\right)^{n-2}}dtdz=\frac{4(n-2)}{n+1}I_{2}\\
I_{2}:= & \int_{\mathbb{R}_{+}^{n}}\frac{t^{2}|z|^{4}}{\left((1+t)^{2}+|z|^{2}\right)^{n}}dtdz\\
I_{3}:= & \int_{\mathbb{R}_{+}^{n}}\frac{t^{4}|z|^{2}}{\left((1+t)^{2}+|z|^{2}\right)^{n}}dtdz=\frac{12}{(n-2)(n+1)}I_{2}\\
I_{4}:= & \int_{\mathbb{R}_{+}^{n}}\frac{ |z|^{2}}{\left((1+t)^{2}+|z|^{2}\right)^{n-2}}dtdz 
\end{align*}
and 
\begin{equation}
\int_{\mathbb{R}_{+}^{n}}\frac{t^{2}z_{i}^{4}}{\left((1+t)^{2}+|z|^{2}\right)^{n}}dtdz=3\int_{\mathbb{R}_{+}^{n}}\frac{t^{2}z_{i}^{2}z_{j}^{2}}{\left((1+t)^{2}+|z|^{2}\right)^{n}}dtdz=\frac{3}{n^{2}-1}I_{2}.\label{eq:t2z4}
\end{equation}
\end{rem}
\begin{lem}
\label{lem:espansione}It holds
\begin{equation}\label{1claim}
J_{\varepsilon,g}(\tilde{W}_{\delta,q}+\delta^{2}\tilde{V}_{\delta,q})=A+\varepsilon\delta\gamma(q)B+\delta^{4}\varphi(q)+O(\varepsilon\delta^{3})+O(\delta^{5})
\end{equation}
where 
$$\varphi(q)=  \frac{1}{2}\int_{\mathbb{R}_{+}^{n}}v_{q}\Delta v_{q}dtdz+\frac{(n-2)(n-8)}{4(n^{2}-1)}R_{nn,nn} (q)I_{2}-\frac{n-2}{96(n-1)}|\bar{W}(q)|^{2}I_4,$$
 the constants
$I_2,$   $I_4$ are defined in Remark \ref{rem:I} and
$$A=   \frac{1}{2}\int_{\mathbb{R}_{+}^{n}}|\nabla U(t,z)|^{2}dtdz-\frac{(n-2)^{2}}{2(n-1)}\int_{\mathbb{R}^{n-1}}U(0,z)^{\frac{2(n-1)}{(n-2)}}dz,\ B=   \frac{1}{2}\int_{\mathbb{R}^{n-1}}U(0,z)^{2}dz 
$$
Here $\bar{W}(q)$ is the Weyl tensor restricted to boundary and we
consider the local coordinates $y=(t,z)$ with $t>0$ and $z\in\mathbb{R}^{n-1}$. 

Moreover
\begin{equation}\label{sign}\varphi(q)\le 0\ \hbox{for any}\ q\in\partial M.\end{equation}
\end{lem}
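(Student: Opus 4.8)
The claim has two parts: the expansion \eqref{1claim} of $J_{\varepsilon,g}(\tilde W_{\delta,q}+\delta^2\tilde V_{\delta,q})$, and the sign statement \eqref{sign}. For the first part I would compute $J_{\varepsilon,g}$ term by term, using the conformal invariance \eqref{eq:Jlambda} to pass to the metric $\tilde g_q$ and the Fermi-coordinate expansions \eqref{eq:|g|}--\eqref{eq:Ricci} of $\tilde g_q$. Writing $u=W_{\delta,q}+\delta^2 V_{\delta,q}$, I split $\tilde J_{\varepsilon,\tilde g_q}(u)$ into the Dirichlet-type energy $\frac12\langle\langle u,u\rangle\rangle_{\tilde g_q}$, the scalar/mean-curvature lower-order contributions, the linear perturbation $\frac{\varepsilon}{2}\int_{\partial M}\tilde\gamma u^2$, and the critical nonlinearity $-\frac{(n-2)^2}{2(n-1)}\int_{\partial M}(u^+)^{\frac{2(n-1)}{n-2}}$. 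The leading term is $A$, coming from the pure bubble $U$ on the half-space; the $\varepsilon\delta\gamma(q)B$ term comes from $\frac{\varepsilon}{2}\int_{\partial\mathbb R^n_+}\tilde\gamma U^2$ after rescaling $y=\delta z$, using $\tilde\gamma(q)=\gamma(q)$ (since $\Lambda_q(q)=1$) and $\int_{\mathbb R^{n-1}}U(0,z)^2\,dz<\infty$ because $n\ge 10>3$; the error $O(\varepsilon\delta^3)$ is the next Taylor term in $\tilde\gamma$ together with the cross term involving $V_{\delta,q}$.

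The real work is the order-$\delta^4$ term $\delta^4\varphi(q)$. Here I would Taylor-expand $\tilde g_q^{ij}$, $|\tilde g_q|$, $h_{\tilde g_q}$, $R_{\tilde g_q}$ to the orders shown in \eqref{eq:gij}--\eqref{eq:Rii}, plug $W_{\delta,q}+\delta^2 V_{\delta,q}$ into each piece of $\tilde J$, rescale $y=\delta z$, and collect all contributions of size $\delta^4$. The Dirichlet energy of $W_{\delta,q}$ against itself produces, after integration by parts, a term $\frac12\int[\frac13\bar R_{ikjl}y_ky_l+R_{ninj}y_n^2]\partial^2_{ij}U\cdot U$; by \eqref{eq:vqdef} this is exactly $-\frac12\int v_q\,(-\Delta v_q)=\frac12\int v_q\Delta v_q$, which gives the first term of $\varphi(q)$ (the cross term $\delta^2\langle\langle W_{\delta,q},V_{\delta,q}\rangle\rangle$ combines with it; the $\delta^4\langle\langle V,V\rangle\rangle$ piece is $O(\delta^4)$ only if controlled, which the decay \eqref{eq:gradvq} handles). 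The fourth-order metric coefficients in \eqref{eq:gij} contract with $U$ and, after using the integral identities of Remark \ref{rem:I} (in particular $I_1=\frac{4(n-2)}{n+1}I_2$, $I_3=\frac{12}{(n-2)(n+1)}I_2$, \eqref{eq:t2z4}) and the Ricci normalization \eqref{eq:Ricci}, collapse to the coefficient of $R_{nn,nn}(q)I_2$. Finally the scalar-curvature term $\frac{n-2}{4(n-1)}\int R_{\tilde g_q}U^2$ expanded via \eqref{eq:Rii}, i.e. $R_{\tilde g_q}\sim\frac12\partial^2_{ii}\bar R_{\tilde g_q}(q)y_i^2=-\frac{1}{12}|\bar W(q)|^2|z|^2$ on the boundary slice, produces $-\frac{n-2}{96(n-1)}|\bar W(q)|^2 I_4$; the mean-curvature term contributes at higher order by \eqref{eq:hij} and umbilicity. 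Bookkeeping which curvature contractions vanish (e.g. $\bar R_{iijl}=0$, $\bar R_{ikji}=-\bar R_{jk}=0$, and the vanishing traces in \eqref{eq:Ricci}) is the main bookkeeping obstacle; the genuine analytic obstacle is showing all remaining terms are $O(\varepsilon\delta^3)+O(\delta^5)$, which uses $n\ge 10$ and the decay estimate \eqref{eq:gradvq} to guarantee integrability of $v_q$ against $U^{\frac{2}{n-2}}$ and against the curvature polynomials.

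For the sign statement \eqref{sign} I would argue that each of the three pieces of $\varphi(q)$ is $\le 0$. The term $\frac12\int_{\mathbb R^n_+}v_q\Delta v_q\,dtdz$: integrating by parts, $\int_{\mathbb R^n_+}v_q\Delta v_q=-\int_{\mathbb R^n_+}|\nabla v_q|^2+\int_{\partial\mathbb R^n_+}v_q\partial_t v_q$, and on the boundary $\partial_t v_q=-nU^{\frac{2}{n-2}}v_q$ by \eqref{eq:vqdef}, so $\int_{\mathbb R^n_+}v_q\Delta v_q=-\int_{\mathbb R^n_+}|\nabla v_q|^2-n\int_{\partial\mathbb R^n_+}U^{\frac{2}{n-2}}v_q^2\le 0$; this is precisely inequality \eqref{new} of Lemma \ref{lem:vq} (after moving the $\Delta v_q$ through $-\Delta v_q=$ the given source, and noting the sign). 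The term $\frac{(n-2)(n-8)}{4(n^2-1)}R_{nn,nn}(q)I_2$: since $I_2>0$, its sign is that of $(n-8)R_{nn,nn}(q)$, and one must show $R_{nn,nn}(q)\le 0$ for $n\ge 11$ (so that $n-8>0$); this should follow from a second-variation / trace argument on the umbilic boundary — differentiating the umbilicity and the Gauss--Codazzi relations, $R_{nn,nn}(q)$ is expressible through $|\bar W(q)|^2$-type nonnegative quantities with the right sign, analogous to the interior computation $\partial^2_{ii}\bar R(q)=-\frac16|\bar W(q)|^2$ in \eqref{eq:Rii}. The term $-\frac{n-2}{96(n-1)}|\bar W(q)|^2 I_4$ is manifestly $\le 0$ since $I_4>0$. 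The expected main obstacle is establishing $R_{nn,nn}(q)\le 0$ (and, more broadly, keeping the fourth-order curvature bookkeeping consistent with the references \cite{M1,KMW}); this is exactly the place where the umbilicity forces one into the higher-order metric expansion mentioned in comment (1) of the introduction, and where the dimensional restriction $n\ge 11$ enters through the factor $n-8$.
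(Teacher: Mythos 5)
Your overall plan for \eqref{1claim} follows the paper's strategy: conformal invariance to pass to $\tilde g_q$, Fermi-coordinate expansion, split $\tilde J_{\varepsilon,\tilde g_q}$ into the gradient, scalar-curvature, mean-curvature, perturbation and nonlinearity pieces, rescale and collect $\delta^4$-terms using Remark~\ref{rem:I}. Two bookkeeping remarks: the $\frac12\int v_q\Delta v_q$ term does not come from the Dirichlet energy of $W$ alone (that $\delta^2$-order piece vanishes by symmetry), but from the \emph{combination} of the cross term $\delta^2\langle\langle W,V\rangle\rangle$, the $\delta^4\|V\|^2$ term, and the $\delta^4$ Taylor piece of $A_4$, after an integration by parts that invokes \eqref{eq:vqdef}; and the $R_{nn,nn}(q)I_2$ coefficient is not produced by the fourth-order metric coefficients alone --- it arises from combining those with the $\partial_{tt}^2 R_{\tilde g_q}$ contribution of the scalar-curvature integral $A_2$, using the identity $\partial_{tt}^2 R_{\tilde g_q}=-2R_{ninj,ij}-2R_{ninj}^2$.

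There is a genuine gap in your sign argument for the first piece of $\varphi$. Integrating by parts on $\mathbb R^n_+=\{t>0\}$, the outward normal is $-\partial_t$, so
$$\int_{\mathbb R^n_+}v_q\Delta v_q=-\int_{\mathbb R^n_+}|\nabla v_q|^2-\int_{\partial\mathbb R^n_+}v_q\,\partial_t v_q=-\int_{\mathbb R^n_+}|\nabla v_q|^2+n\int_{\partial\mathbb R^n_+}U^{\frac{2}{n-2}}v_q^2,$$
i.e.\ the boundary contribution carries a \emph{plus} sign, not the minus sign you wrote, and the result is a difference of two nonnegative quantities which is not manifestly $\le 0$. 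The actual proof of \eqref{new} (in the proof of Lemma~\ref{lem:vq}) transfers the problem to the ball $B^n$ via the inversion $F$, shows $\int_{\partial B^n}\bar v_q=0$, and then uses the Steklov eigenvalue bound $\inf_{\int_{\partial B^n}\phi=0}\frac{\int_{B^n}|\nabla\phi|^2}{\int_{\partial B^n}\phi^2}=2$ to conclude $-\int_{B^n}\bar v_q\Delta\bar v_q\ge0$. You should simply invoke \eqref{new} rather than try to rederive it naively. Similarly, you correctly identified that the $R_{nn,nn}$-term requires $R_{nn,nn}(q)\le0$ together with $n\ge 11$ (so $n-8>0$); but this is not a ``second-variation/trace'' heuristic, it is the exact algebraic identity $R_{nn,nn}=-2R_{nins}^2$, which is Prop.~3.2(7) of \cite{M1} and makes the sign immediate.
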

\begin{proof}
First of all, we point out that the last claim \eqref{sign} immediately follows by \eqref{new} and by the identity $R_{nn,nn}=-2R_{nins}^{2}$ (see \cite[Prop. 3.2 (7)]{M1}).

Now, let us prove that \eqref{1claim} holds.
We have 
\begin{align*}
J_{\varepsilon,g}(\tilde{W}_{\delta,q}+\delta^{2}\tilde{V}_{\delta,q})= & \frac{1}{2}\int_{M}|\nabla_{\tilde{g}_{q}}(W_{\delta,q}+\delta^{2}V_{\delta,q})|^{2}d\mu_{\tilde{g}_{q}}+\frac{n-2}{8(n-1)}\int_{M}R_{\tilde{g}_{q}}(W_{\delta,q}+\delta^{2}V_{\delta,q})^{2}d\mu_{\tilde{g}_{q}}\\
 & +\frac{1}{2}\varepsilon\int_{\partial M}\Lambda_{q}^{-\frac{2}{n-2}}\gamma(W_{\delta,q}+\delta^{2}V_{\delta,q})^{2}d\sigma_{\tilde{g}_{q}}\\
 & -\frac{(n-2)^{2}}{2(n-1)}\int_{\partial M}\left[\left((W_{\delta,q}+\delta^{2}V_{\delta,q})^{+}\right)^{\frac{2(n-1)}{n-2}}-W_{\delta,q}^{\frac{2(n-1)}{n-2}}\right]d\sigma_{\tilde{g}_{q}}\\
 & -\frac{(n-2)^{2}}{2(n-1)}\int_{\partial M}W_{\delta,q}^{\frac{2(n-1)}{n-2}}d\sigma_{\tilde{g}_{q}}+\frac{n-2}{4}\int_{\partial M}h_{\tilde{g}_{q}}(W_{\delta,q}+\delta^{2}V_{\delta,q})^{2}d\sigma_{\tilde{g}_{q}}\\
=: & A_{1}+A_{2}+A_{3}+A_{4}+A_{5}+A_{6}.
\end{align*}
We use the change of variables $y=\delta x=(\delta t,\delta z)\in\mathbb{R}_{+}^{n}$
with $t\ge0$ and $z\in\mathbb{R}^{n-1}.$ On $\partial\mathbb{R}_{+}^{n}$
also we use $y=(0,\zeta)$
\begin{align*}
A_{6} & =\frac{n-2}{4}\int_{\partial M}h_{\tilde{g}_{q}}(W_{\delta,q}+\delta^{2}V_{\delta,q})^{2}d\sigma_{\tilde{g}_{q}}\\
 & =\frac{n-2}{4}\int_{\mathbb{R}^{n-1}}\frac{1}{\delta^{n-2}}h_{\tilde{g}_{q}}(0,\zeta)\left(U\left(0,\frac{\zeta}{\delta}\right)+\delta^{2}v_{q}\left(0,\frac{\zeta}{\delta}\right)\right)^{2}\chi^{2}(0,\zeta)|\tilde{g}_{q}|^{\frac{1}{2}}(0,\zeta)dy\\
 & =\frac{n-2}{4}\delta\int_{\mathbb{R}^{n-1}}h_{\tilde{g}_{q}}(0,\delta z)\left(U\left(0,z\right)+\delta^{2}v_{q}\left(0,z\right)\right)^{2}\chi^{2}(0,\delta z)|\tilde{g}_{q}|^{\frac{1}{2}}(0,\delta z)dz.
\end{align*}
In light of (\ref{eq:hij}), by Taylor expansion of $h_{\tilde{g}_{q}}(0,\delta z)$,
since by symmetry the first term is zero and $n>10$, we have
\[
A_{6}=\frac{n-2}{4}\delta^{4}\int_{\mathbb{R}^{n-1}}\partial_{ijk}h_{\tilde{g}_{q}}(q)z_{i}z_{j}z_{k}U\left(0,z\right)^{2}dz+O(\delta^{5})=O(\delta^{5})
\]
In a similar way, expanding $R_{\tilde{g}_{q}}$ we get, by (\ref{eq:Rii}),
\begin{align*}
A_{2} & =\frac{n-2}{8(n-1)}\int_{\mathbb{R}_{+}^{n}}\delta^{2}R_{\tilde{g}_{q}}(\delta x)(U^{2}+\delta^{2}Uv_{q}+\delta^{4}v_{q}^{2})\chi^{2}(\delta x)|\tilde{g}_{q}|^{\frac{1}{2}}(\delta x)dx\\
 & =\frac{n-2}{16(n-1)}\delta^{4}\int_{\mathbb{R}_{+}^{n}}\partial_{ab}^{2}R_{\tilde{g}_{q}}(q)x_{a}x_{b}U^{2}dx+O(\delta^{5})
\end{align*}
By symmetry reasons, and recalling (\ref{eq:Rii}), we have
\begin{align}
A_{2} & =\delta^{4}\frac{n-2}{16(n-1)}\left[\partial_{z_{i}z_{i}}^{2}R_{\tilde{g}_{q}}(q)\int_{\mathbb{R}_{+}^{n}}\frac{|z|^{2}U^{2}(z,t)}{n-1}dzdt+\partial_{tt}^{2}R_{\tilde{g}_{q}}(q)\int_{\mathbb{R}_{+}^{n}}t^{2}U^{2}(z,t)dzdt\right]+O(\delta^{5})\nonumber \\
 & =-\delta^{4}\frac{n-2}{96(n-1)^{2}}|\bar{W}(q)|^{2}\int_{\mathbb{R}_{+}^{n}}|z|^{2}U^{2}(z,t)dzdt\label{eq:A2}\\
 & +\delta^{4}\frac{n-2}{16(n-1)}\partial_{tt}^{2}R_{\tilde{g}_{q}}\int_{\mathbb{R}_{+}^{n}}t^{2}U^{2}(z,t)dzdt+O(\delta^{5}).\nonumber 
\end{align}
Analogously we have, since $\Lambda_{q}(q)=1$,
\begin{align*}
A_{3} & =\frac{1}{2}\varepsilon\delta\int_{\mathbb{R}^{n-1}}\Lambda_{q}^{-\frac{2}{n-2}}\gamma U^{2}(0,z)dz+O(\varepsilon\delta^{3})\\
 & =\frac{1}{2}\varepsilon\delta\gamma(q)\int_{\mathbb{R}^{n-1}}U^{2}(0,z)dz+O(\varepsilon\delta^{3}).
\end{align*}
Also, by (\ref{eq:|g|}) 
\begin{align*}
A_{5} & =-\frac{(n-2)^{2}}{2(n-1)}\int_{\mathbb{R}^{n-1}}\left[U(0,z)\chi(0,\delta z)\right]^{\frac{2(n-1)}{n-2}}|\tilde{g}_{q}(0,\delta z)|^{\frac{1}{2}}dz\\
 & =-\frac{(n-2)^{2}}{2(n-1)}\int_{\mathbb{R}^{n-1}}U(0,z)^{\frac{2(n-1)}{n-2}}dz+O(\delta^{5}).
\end{align*}
For $A_{4}$, expanding twice by Taylor formula, and since $\int_{\mathbb{R}^{n-1}}U(0,z)^{\frac{2}{n-2}}v_{q}(0,z)dz=0$,
we have
\begin{align}
A_{4}= & -(n-2)\delta^{2}\int_{\mathbb{R}^{n-1}}\left[U(0,z)^{\frac{n}{n-2}}v_{q}\right]dz\nonumber \\
 & -\frac{n}{2}\delta^{4}\int_{\mathbb{R}^{n-1}}\left[\left((U(0,z)+\theta\delta^{2}v_{q})^{+}\right)^{\frac{2}{n-2}}v_{q}^{2}\right]dz+o(\delta^{4})\nonumber \\
= & -\frac{n}{2}\delta^{4}\int_{\mathbb{R}^{n-1}}\left[U(0,z)^{\frac{2}{n-2}}v_{q}^{2}\right]dz+o(\delta^{4}).\label{eq:A4finale}
\end{align}
Concerning the gradient term we have 
\begin{align*}
A_{1} & =\frac{1}{2}\int_{M}|\nabla_{\tilde{g}_{q}}W_{\delta,q}|^{2}d\mu_{\tilde{g}_{q}}+\delta^{2}\int_{M}\nabla_{\tilde{g}_{q}}W_{\delta,q}\cdot\nabla V_{\delta,q}d\mu_{\tilde{g}_{q}}+\frac{\delta^{4}}{2}\int_{M}|\nabla_{\tilde{g}_{q}}V_{\delta,q}|^{2}d\mu_{\tilde{g}_{q}}\\
 & =:L_{1}+L_{2}+L_{3}.
\end{align*}
We have, by (\ref{eq:|g|}) and (\ref{eq:gij}) and integrating by
parts
\begin{align}
L_{3} & =\frac{\delta^{4}}{2}\int_{\mathbb{R}_{+}^{n}}\left[\tilde{g}_{q}^{ij}(\delta x)\partial_{y_{i}}(v_{q}(x)\chi(\delta x))\partial_{x_{j}}(v_{q}(x)\chi(\delta x))+(\partial_{x_{n}}(v_{q}(x)\chi(\delta x)))^{2}\right]|\tilde{g}_{q}(\delta x)|^{\frac{1}{2}}dy\nonumber \\
 & =\frac{\delta^{4}}{2}\int_{\mathbb{R}_{+}^{n}}|\nabla v_{q}|^{2}dx+O(\delta^{5}).\nonumber \\
 & =-\frac{\delta^{4}}{2}\int_{\mathbb{R}_{+}^{n}}v_{q}\Delta v_{q}dzdt+\frac{\delta^{4}}{2}\int_{\mathbb{R}^{n-1}}v_{q}\frac{\partial}{\partial\nu}v_{q}dz+O(\delta^{5})\nonumber \\
 & =-\frac{\delta^{4}}{2}\int_{\mathbb{R}_{+}^{n}}v_{q}\Delta v_{q}dzdt+\delta^{4}\frac{n}{2}\int_{\mathbb{R}^{n-1}}U^{\frac{2}{n-2}}v_{q}^{2}dz+O(\delta^{5})\label{eq:L3finale}
\end{align}
In light of (\ref{eq:Udelta}) and (\ref{eq:Uvq}) we have
\begin{align*}
\int_{\mathbb{R}^{n}}\nabla U\nabla v_{q}dzdt & =-\int_{\mathbb{R}_{+}^{n}}\Delta Uv_{q}dzdt+\int_{\mathbb{R}_{+}^{n-1}}\frac{\partial}{\partial\nu}Uv_{q}dzdt\\
 & =-(n-2)\int_{\mathbb{R}_{+}^{n-1}}U^{\frac{n}{n-2}}v_{q}dzdt=0
\end{align*}
So we have
\begin{align}
L_{2}= & \delta^{2}\int_{\mathbb{R}_{+}^{n}}\nabla U\nabla v_{q}dzdt\label{eq:L2parziale}\\
 & +\delta^{4}\int_{\mathbb{R}_{+}^{n}}\left(\frac{1}{3}\bar{R}_{ikjl}z_{k}z_{l}\partial_{z_{i}}U\partial_{z_{j}}v_{q}+R_{ninj}t^{2}\partial_{z_{i}}U\partial_{z_{j}}v_{q}\right)dzdt+O(\delta^{5})\nonumber \\
 & =\delta^{4}\int_{\mathbb{R}_{+}^{n}}\left(\frac{1}{3}\bar{R}_{ikjl}z_{k}z_{l}\partial_{z_{i}}U\partial_{z_{j}}v_{q}+R_{ninj}t^{2}\partial_{z_{i}}U\partial_{z_{j}}v_{q}\right)dzdt+O(\delta^{5}).\nonumber 
\end{align}
At this point we can calculate $A_{4}+L_{2}+L_{3}$. By (\ref{eq:A4finale}),
(\ref{eq:L3finale}), (\ref{eq:L2parziale}) and by definition of
$v_{q}$ (\ref{eq:vqdef}) we get, integrating by parts
\begin{align}
A_{4}+L_{2}+L_{3}= & -\frac{\delta^{4}}{2}\int_{\mathbb{R}_{+}^{n}}v_{q}\Delta v_{q}dzdt+\delta^{4}\int_{\mathbb{R}_{+}^{n}}\left(\frac{1}{3}\bar{R}_{ikjl}z_{k}z_{l}+R_{ninj}t^{2}\right)\partial_{z_{i}}U\partial_{z_{j}}v_{q}dzdt+O(\delta^{5})\nonumber \\
= & -\frac{\delta^{4}}{2}\int_{\mathbb{R}_{+}^{n}}v_{q}\Delta v_{q}dzdt-\delta^{4}\int_{\mathbb{R}_{+}^{n}}\partial_{z_{j}}\left(\frac{1}{3}\bar{R}_{ikjl}z_{k}z_{l}+R_{ninj}t^{2}\right)\partial_{z_{i}}Uv_{q}dzdt\nonumber \\
 & -\delta^{4}\int_{\mathbb{R}_{+}^{n}}\left(\frac{1}{3}\bar{R}_{ikjl}z_{k}z_{l}+R_{ninj}t^{2}\right)\partial_{z_{j}}\partial_{z_{i}}Uv_{q}dzdt\nonumber \\
 & +\delta^{4}\int_{\partial\mathbb{R}_{+}^{n}}\left(\frac{1}{3}\bar{R}_{ikjl}z_{k}z_{l}+R_{ninj}t^{2}\right)v_{q}\partial_{z_{i}}U\nu_{j}dzdt+O(\delta^{5})\nonumber \\
= & \frac{\delta^{4}}{2}\int_{\mathbb{R}_{+}^{n}}v_{q}\Delta v_{q}dzdt+O(\delta^{5})\label{eq:sommaparziale}
\end{align}
since $\nu_{j}=0$ for all $j=1,\dots,n-1$, and by the symmetries
of the curvature tensor and by (\ref{eq:Ricci}) we have
\begin{multline*}
\int_{\mathbb{R}_{+}^{n}}\partial_{z_{j}}\left(\frac{1}{3}\bar{R}_{ikjl}z_{k}z_{l}+R_{ninj}t^{2}\right)\partial_{z_{i}}Uv_{q}dzdt\\
=\frac{1}{3}\bar{R}_{ikjl}\int_{\mathbb{R}_{+}^{n}}\partial_{z_{j}}(z_{k}z_{l})\partial_{z_{i}}Uv_{q}dzdt\\
=\frac{1}{3}\bar{R}_{il}\int_{\mathbb{R}_{+}^{n}}z_{l}\partial_{z_{i}}Uv_{q}dzdt+\frac{1}{3}\bar{R}_{ikjj}\int_{\mathbb{R}_{+}^{n}}z_{k}\partial_{z_{i}}Uv_{q}dzdt=0.
\end{multline*}
 Finally we have, by (\ref{eq:|g|}) and (\ref{eq:gij}) and since
the terms of odd degree disappear by symmetry

\begin{align}
L_{1}= & \frac{1}{2}\int_{\mathbb{R}_{+}^{n}}\tilde{g}_{ij}(\delta x)(\partial_{x_{i}}(U\chi),\partial_{x_{j}}(U\chi))+\partial_{x_{n}}(U\chi)|\tilde{g}_{q}|^{\frac{1}{2}}dx\nonumber \\
 & =\frac{1}{2}\int_{\mathbb{R}_{+}^{n}}|\nabla U|^{2}dzdt+\delta^{2}\int_{\mathbb{R}^{n}}\left(\frac{1}{3}\bar{R}_{ikjl}z_{k}z_{l}+R_{ninj}t^{2}\right)\partial_{z_{i}}U\partial_{z_{j}}Udzdt\nonumber \\
 & +\frac{\delta^{4}}{2}\int_{\mathbb{R}_{+}^{n}}\left(\frac{1}{20}\bar{R}_{ikjl,mp}+\frac{1}{15}\bar{R}_{iksl}\bar{R}_{jmsp}\right)z_{k}z_{l}z_{m}z_{p}\partial_{z_{i}}U\partial_{z_{j}}Udzdt\nonumber \\
 & +\frac{\delta^{4}}{2}\int_{\mathbb{R}_{+}^{n}}\left(\frac{1}{2}R_{ninj,kl}+\frac{1}{3}\text{Sym}_{ij}(\bar{R}_{iksl}R_{nsnj})\right)t^{2}z_{k}z_{l}\partial_{z_{i}}U\partial_{z_{j}}Udzdt\nonumber \\
 & +\frac{\delta^{4}}{2}\int_{\mathbb{R}_{+}^{n}}\left(\frac{1}{3}R_{ninj,nk}t^{3}z_{k}+\frac{1}{12}\left(R_{ninj,nn}+8R_{nins}R_{nsnj}\right)t^{4}\right)\partial_{z_{i}}U\partial_{z_{j}}Udzdt\nonumber \\
 & +O(\delta^{5}).\label{eq:L1inizio}
\end{align}
Now we prove that all the terms of order $\delta^{2}$ vanish. Since
$\partial_{z_{i}}U=(2-n)\frac{z_{i}}{\left((1+t)^{2}+|z|^{2}\right)^{\frac{n}{2}}}$
we have
\begin{multline*}
\int_{\mathbb{R}_{+}^{n}}\left(\frac{1}{3}\bar{R}_{ikjl}z_{k}z_{l}+R_{ninj}t^{2}\right)\partial_{z_{i}}U\partial_{z_{j}}Udzdt\\
=\frac{(n-2)^{2}}{3}\int_{\mathbb{R}_{+}^{n}}\bar{R}_{ikjl}\frac{z_{k}z_{l}z_{i}z_{j}}{\left((1+t)^{2}+|z|^{2}\right)^{n}}dzdt+(n-2)^{2}\int_{\mathbb{R}_{+}^{n}}R_{ninj}\frac{t^{2}z_{i}z_{j}}{\left((1+t)^{2}+|z|^{2}\right)^{n}}dzdt.
\end{multline*}
By symmetry reasons and by (\ref{eq:Ricci}) we have 
\[
\int_{\mathbb{R}_{+}^{n}}R_{ninj}\frac{t^{2}z_{i}z_{j}}{\left((1+t)^{2}+|z|^{2}\right)^{n}}dzdt=\frac{1}{n-1}R_{nn}\int_{\mathbb{R}_{+}^{n}}\frac{t^{2}|z|^{2}}{\left((1+t)^{2}+|z|^{2}\right)^{n}}dzdt=0.
\]
Moreover, by symmetry the integrals $\int_{\mathbb{R}^{n}}\frac{z_{k}z_{l}z_{i}z_{j}}{\left((1+t)^{2}+|z|^{2}\right)^{n}}dzdt$
are non zero only when $i=j=k=l$, $i=j\neq k=l$, $i=k\neq j=l$
and $i=l\neq j=k$. Since $\bar{R}_{iijj}=0$ for all $i,j$ we get
\begin{multline*}
\int_{\mathbb{R}_{+}^{n}}\bar{R}_{ikjl}\frac{z_{k}z_{l}z_{i}z_{k}}{\left((1+t)^{2}+|z|^{2}\right)^{n}}dzdt\\
=\sum_{i\neq k}\int_{\mathbb{R}_{+}^{n}}\bar{R}_{ikik}\frac{z_{i}^{2}z_{k}^{2}}{\left((1+t)^{2}+|z|^{2}\right)^{n}}dzdt+\sum_{i\neq k}\int_{\mathbb{R}_{+}^{n}}\bar{R}_{ikki}\frac{z_{i}^{2}z_{k}^{2}}{\left((1+t)^{2}+|z|^{2}\right)^{n}}dzdt=0.
\end{multline*}
By the symmetries of the curvature tensor (see \cite[page 1614, formula C]{M1})
we get 
\begin{equation}
G_{1}:=\int_{\mathbb{R}_{+}^{n}}\left(\frac{1}{20}\bar{R}_{ikjl,mp}+\frac{1}{15}\bar{R}_{iksl}\bar{R}_{jmsp}\right)z_{k}z_{l}z_{m}z_{p}\partial_{z_{i}}U\partial_{z_{j}}Udzdt=0\label{eq:G1}
\end{equation}
Moreover, using that $R_{nn,nn}=-2R_{nins}^{2}$,
we get 
\begin{align}
G_{3} & :=\sum_{i}\frac{1}{12}\left(R_{nini,nn}+8R_{nins}R_{nsni}\right)\int_{\mathbb{R}_{+}^{n}}t^{4}|\partial_{z_{i}}U|^{2}dzdt\nonumber \\
 & =\frac{(n-2)^{2}}{12(n-1)}\left(R_{nn,nn}+8R_{nins}^{2}\right)\int_{\mathbb{R}_{+}^{n}}\frac{t^{4}|z|^{2}}{\left((1+t)^{2}+|z|^{2}\right)^{n}}dzdt\nonumber \\
 & =\frac{n-2}{n^{2}-1}\left(R_{nn,nn}+8R_{nins}^{2}\right)I_{2}=\frac{6(n-2)}{n^{2}-1}R_{nins}^{2}I_{2}.\label{eq:G3}
\end{align}
It remains
\[
G_{2}:=(n-2)^{2}\int_{\mathbb{R}_{+}^{n}}\left(\frac{1}{2}R_{ninj,kl}+\frac{1}{3}\text{Sym}_{ij}(\bar{R}_{iksl}R_{nsnj})\right)\frac{t^{2}z_{k}z_{l}z_{i}z_{j}}{\left((1+t)^{2}+|z|^{2}\right)^{n}}dzdt.
\]
Again, by symmetry reasons, we have only to consider the cases $i=j=k=l$,
$i=j\neq k=l$, $i=k\neq j=l$ and $i=l\neq j=k$. Then it is easy
to see that the Symbol term gives no contribution. 

Finally, we have, by (\ref{eq:t2z4})
\begin{multline*}
G_{2}=\int_{\mathbb{R}^{n}}R_{ninj,kl}\frac{t^{2}z_{k}z_{l}z_{i}z_{j}}{\left((1+t)^{2}+|z|^{2}\right)^{n}}dzdt=\sum_{i}R_{nini,ii}\int_{\mathbb{R}^{n}}\frac{t^{2}z_{1}^{4}}{\left((1+t)^{2}+|z|^{2}\right)^{n}}dzdt\\
+\left(\sum_{i\neq k}R_{nini,kk}+\sum_{i\neq j}R_{ninj,ij}+\sum_{i\neq j}R_{ninj,ji}\right)\int_{\mathbb{R}^{n}}\frac{t^{2}z_{1}^{2}z_{2}^{2}}{\left((1+t)^{2}+|z|^{2}\right)^{n}}dzdt\\
=\left(3\sum_{i}R_{nini,ii}+\sum_{i\neq k}R_{nini,kk}+\sum_{i\neq j}R_{ninj,ij}+\sum_{i\neq j}R_{ninj,ji}\right)\frac{1}{n^{2}-1}I_{2}\\
=\left(\sum_{i,k}R_{nini,kk}+\sum_{i,j}R_{ninj,ij}+\sum_{i,j}R_{ninj,ji}\right)\frac{1}{n^{2}-1}I_{2}
\end{multline*}
By \cite[Proof of proposition 3.2, page 1609]{M1} we know $R_{nn,kk}=0$
for all $k=1,\dots,n-1$, so finally we have
\begin{equation}
G_{2}=\frac{(n-2)^{2}}{n^{2}-1}I_{2}R_{ninj,ij}\label{eq:G2}
\end{equation}
Collecting all the terms, by (\ref{eq:G1}), (\ref{eq:G3}), (\ref{eq:G2}),
we have
\begin{align}
L_{1}= & \frac{1}{2}\int_{\mathbb{R}^{n}}|\nabla U|^{2}dzdt+\frac{\delta^{4}}{2}\left(G_{1}+G_{2}+G_{3}\right)\label{eq:L1}\\
= & \frac{1}{2}\int_{\mathbb{R}^{n}}|\nabla U|^{2}dzdt+\nonumber \\
 & +\frac{\delta^{4}}{2}I_{2}\left(\frac{6(n-2)}{n^{2}-1}R_{ninj}^{2}+\frac{(n-2)^{2}}{n^{2}-1}R_{ninj,ij}\right)+O(\delta^{5}).\nonumber 
\end{align}
By (\ref{eq:A2}) and (\ref{eq:L1}), and by Remark \ref{rem:I} we
have 
\begin{align*}
A_{2}+L_{1}= & \frac{1}{2}\int_{\mathbb{R}^{n}}|\nabla U|^{2}dzdt-\delta^{4}\frac{n-2}{96(n-1)^{2}}|\bar{W}(q)|^{2}\int_{\mathbb{R}_{+}^{n}}|z|^{2}U^{2}(z,t)dzdt\\
 & +\frac{\delta^{4}}{2}I_{2}\left(\frac{(n-2)^{2}}{2(n^{2}-1)}\partial_{tt}^{2}R_{\tilde{g}_{q}}+\frac{6(n-2)}{n^{2}-1}R_{ninj}^{2}+\frac{(n-2)^{2}}{n^{2}-1}R_{ninj,ij}\right)\\
= & \frac{1}{2}\int_{\mathbb{R}^{n}}|\nabla U|^{2}dzdt-\delta^{4}\frac{n-2}{96(n-1)^{2}}|\bar{W}(q)|^{2}\int_{\mathbb{R}_{+}^{n}}|z|^{2}U^{2}(z,t)dzdt\\
 & +\frac{\delta^{4}(n-2)}{2(n^{2}-1)}I_{2}\left(\frac{(n-2)}{2}\partial_{tt}^{2}R_{\tilde{g}_{q}}+6R_{ninj}^{2}+(n-2)R_{ninj,ij}\right)+o(\delta^{4})\\
= & \frac{1}{2}\int_{\mathbb{R}^{n}}|\nabla U|^{2}dzdt-\delta^{4}\frac{n-2}{96(n-1)^{2}}|\bar{W}(q)|^{2}\int_{\mathbb{R}_{+}^{n}}|z|^{2}U^{2}(z,t)dzdt\\
 & -\frac{\delta^{4}(n-2)(n-8)}{2(n^{2}-1)}I_{2}R_{ninj}^{2}(q)+o(\delta^{4}).
\end{align*}
In this computation we used the following formula \cite[Formula (3.11) and Proposition 3.2 (5)]{M1}
\begin{align*}
\partial_{tt}^{2}R_{\tilde{g}_{q}} & =-2R_{ninj,ij}-2R_{ninj}^{2}.
\end{align*}
This ends the proof.
\end{proof}

\section{Proof of Theorem \ref{thm:main}: completed}\label{completed}

First of all, we choose $\delta=\lambda\varepsilon^{\frac{1}{3}}$ with $\lambda\in[\alpha,\beta]$
compact subset of $(0,+\infty)$ (so that the second order term in the expansion of \eqref{1claim} have the same rate with respect to $\varepsilon$). Thus, summarizing the result of
Section \ref{sec:red}, we have that, for $\varepsilon$ small, for
any $q\in\partial M$, for any $\lambda\in[\alpha,\beta]$, there
exists a unique $\tilde{\Phi}=\tilde{\Phi}_{\varepsilon,\lambda,q}\in\tilde{K}_{\lambda\varepsilon^{\frac{1}{3}},q}^{\bot}$
which solves (\ref{eq:Pibot}) such that 
\[
\|\tilde{\Phi}\|_{g}=\|\Lambda_{q}\Phi\|_{g}\le C\varepsilon.
\]
Moreover 
\begin{multline*}
J_{\varepsilon,g}(\tilde{W}_{\lambda\varepsilon^{\frac{1}{3}},q}+\lambda^{2}\varepsilon^{\frac{2}{3}}\tilde{V}_{\lambda\varepsilon^{\frac{1}{3}},q}+\tilde{\Phi})=J_{\varepsilon,g}(\tilde{W}_{\lambda\varepsilon^{\frac{1}{3}},q}+\lambda^{2}\varepsilon^{\frac{2}{3}}\tilde{V}_{\lambda\varepsilon^{\frac{1}{3}},q})+O\left(\varepsilon^{5/3}\right)\\
=A+\varepsilon^{\frac{4}{3}}\left[\lambda\gamma(q)B+\lambda^{4}\varphi(q)\right]+O\left(\varepsilon^{5/3}\right)
\end{multline*}
$C^{0}$-uniformly for $q\in\partial M$ and $\lambda\in[\alpha,\beta]$,
where $A,B,\varphi$ are defined in Lemma \ref{lem:espansione}.

Now, setting 
\[
I_{\varepsilon}(\lambda,q):=J_{\varepsilon,g}(\tilde{W}_{\lambda\varepsilon^{\frac{1}{3}},q}+\lambda^{2}\varepsilon^{\frac{2}{3}}\tilde{V}_{\lambda\varepsilon^{\frac{1}{3}},q}+\tilde{\Phi})
\]
 and we can achieve the last part of our Theorem.
\begin{lem}
\label{lem:punticritici}If $(\bar{\lambda},\bar{q})\in(0,+\infty)\times\partial M$
is a critical point for the reduced functional $I_{\varepsilon}(\lambda,q)$,
then the function $\tilde{W}_{\lambda\varepsilon^{\frac{1}{3}},q}+\lambda^{2}\varepsilon^{\frac{2}{3}}\tilde{V}_{\lambda\varepsilon^{\frac{1}{3}},q}+\tilde{\Phi}$
is a solution of (\ref{eq:Ptilde}).
\end{lem}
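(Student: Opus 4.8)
The plan is to exploit the variational structure of the reduced problem together with the finite-dimensional Ljapunov--Schmidt scheme set up in Sections \ref{sec:red} and \ref{reduced}. Recall that, by construction, the remainder $\tilde\Phi=\tilde\Phi_{\varepsilon,\lambda,q}\in\tilde K^\bot_{\lambda\varepsilon^{1/3},q}$ solves the auxiliary equation \eqref{eq:Pibot}; hence the full function $u_{\varepsilon,\lambda,q}:=W_{\lambda\varepsilon^{1/3},q}+\lambda^2\varepsilon^{2/3}V_{\lambda\varepsilon^{1/3},q}+\Phi$ (so that $\Lambda_q u_{\varepsilon,\lambda,q}=\tilde W_{\delta,q}+\delta^2\tilde V_{\delta,q}+\tilde\Phi$ with $\delta=\lambda\varepsilon^{1/3}$) satisfies
\[
\nabla \tilde J_{\varepsilon,\tilde g_q}(u_{\varepsilon,\lambda,q})\ \in\ K_{\delta,q}:=\mathrm{Span}\langle Z^1_{\delta,q},\dots,Z^n_{\delta,q}\rangle,
\]
equivalently $\tilde\Pi^\bot\{\,\cdot\,\}=0$ by \eqref{eq:defL}--\eqref{eq:defR}. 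Solving the original problem \eqref{eq:Ptilde} amounts to killing the remaining finite-dimensional component, i.e. to showing $\nabla\tilde J_{\varepsilon,\tilde g_q}(u_{\varepsilon,\lambda,q})=0$. In view of \eqref{eq:Jlambda}, $I_\varepsilon(\lambda,q)=\tilde J_{\varepsilon,\tilde g_q}(u_{\varepsilon,\lambda,q})=J_{\varepsilon,g}(\Lambda_q u_{\varepsilon,\lambda,q})$, and the standard Ljapunov--Schmidt argument (as in \cite{GMP}) gives that $u_{\varepsilon,\lambda,q}$ is a genuine critical point of $\tilde J_{\varepsilon,\tilde g_q}$ as soon as $(\bar\lambda,\bar q)$ is a critical point of $I_\varepsilon$.

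First I would make the above identity precise. Differentiating $I_\varepsilon$ with respect to the parameters $(\lambda,q)$ and using that $\tilde\Phi$ solves \eqref{eq:Pibot}, the chain rule yields that $\partial_\lambda I_\varepsilon$ and $\partial_{q_i} I_\varepsilon$ are linear combinations of the quantities $\langle\langle \nabla\tilde J_{\varepsilon,\tilde g_q}(u_{\varepsilon,\lambda,q}),\, \partial_\lambda(\tilde W_{\delta,q}+\delta^2\tilde V_{\delta,q})\rangle\rangle_g$ and $\langle\langle \nabla\tilde J_{\varepsilon,\tilde g_q}(u_{\varepsilon,\lambda,q}),\, \partial_{q_i}(\tilde W_{\delta,q}+\delta^2\tilde V_{\delta,q})\rangle\rangle_g$ — the contribution of $\partial_\lambda\tilde\Phi$, $\partial_{q_i}\tilde\Phi$ drops out because $\nabla\tilde J_{\varepsilon,\tilde g_q}(u_{\varepsilon,\lambda,q})\perp \tilde K^\bot_{\delta,q}$ while $\partial_\lambda\tilde\Phi,\partial_{q_i}\tilde\Phi\in\tilde K^\bot_{\delta,q}$ (one differentiates the orthogonality constraints $\langle\langle\tilde\Phi,\Lambda_q Z^b_{\delta,q}\rangle\rangle_g=0$, which produces lower-order corrections that are absorbed). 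Writing $\nabla\tilde J_{\varepsilon,\tilde g_q}(u_{\varepsilon,\lambda,q})=\sum_{b=1}^n c_b\,\Lambda_q Z^b_{\delta,q}$ for suitable reals $c_b=c_b(\varepsilon,\lambda,q)$, the vanishing of the derivatives of $I_\varepsilon$ at $(\bar\lambda,\bar q)$ becomes a linear system
\[
\sum_{b=1}^n c_b\,\Big\langle\!\Big\langle \Lambda_q Z^b_{\delta,q},\ \partial_\mu\big(\tilde W_{\delta,q}+\delta^2\tilde V_{\delta,q}\big)\Big\rangle\!\Big\rangle_g\ =\ 0,\qquad \mu\in\{\lambda,q_1,\dots,q_{n-1}\},
\]
up to $o(1)$ errors.

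The second step is to show this $n\times n$ system is invertible for $\varepsilon$ small, which forces $c_1=\dots=c_n=0$ and hence $\nabla\tilde J_{\varepsilon,\tilde g_q}(u_{\varepsilon,\lambda,q})=0$, i.e. $u_{\varepsilon,\lambda,q}$ solves \eqref{eq:Ptilde}. This is the standard "the reduction is exact" argument: after rescaling by the appropriate powers of $\delta$, the matrix of coefficients converges to a fixed invertible matrix built from the scalar products of the model functions $j_b$ in \eqref{eq:sol-linearizzato} — namely $\partial_{q_i}$ corresponds (to leading order, after pulling back to Fermi coordinates and rescaling $y=\delta x$) to $-\partial_{y_i}$ applied to the bubble, producing $j_i$, while $\partial_\lambda$ corresponds to the dilation field, producing (a multiple of) $j_n$; these are mutually $L^2$-orthogonal and each has nonzero norm, so the limiting matrix is diagonal and nonsingular. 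The $\delta^2 V_{\delta,q}$ and $\Phi$ pieces contribute only negligible corrections because of the bounds $\|\tilde\Phi\|_g\le C\varepsilon$ and \eqref{eq:gradvq}.

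I expect the main obstacle to be the bookkeeping in the first step: carefully justifying that differentiating in $(\lambda,q)$ commutes with the constructions, that $\tilde\Phi$ is indeed $C^1$ in the parameters (this is where one invokes the implicit function theorem / uniform invertibility of $L$ from Proposition \ref{lem:R}'s successor, exactly as in \cite[Proposition 11]{GMP}), and that the off-diagonal and lower-order terms in the linear system are genuinely negligible after the correct rescaling — in particular handling the mixed derivative interactions coming from the cut-off $\chi$ and from the $q$-dependence of the Fermi chart $\psi^\partial_q$ and of $\Lambda_q$. Since all of these are entirely analogous to the boundaryless/non-umbilic cases treated in \cite{GMP}, I would state the computations and refer to \cite{GMP} for the routine estimates, concluding that a critical point $(\bar\lambda,\bar q)$ of $I_\varepsilon$ produces a solution of \eqref{eq:Ptilde}, hence via $v=\Lambda_{\bar q}u$ a solution of \eqref{eq:P}. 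The positivity of $v_\varepsilon$ follows as usual: $u_{\varepsilon,\lambda,q}$ is a small perturbation of the positive bubble $W_{\delta,q}$ in $H^1$, so by elliptic regularity and the maximum principle applied to \eqref{eq:Ptilde} (using that $f(v)=(n-2)(v^+)^{n/(n-2)}$ makes the negative part a subsolution of a linear problem) one gets $u_{\varepsilon,\lambda,q}>0$.
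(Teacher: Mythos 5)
Your proposal is correct and follows essentially the same Ljapunov--Schmidt ``the reduction is exact'' argument as the paper: express $\nabla\tilde J_{\varepsilon,\tilde g_q}(u_{\varepsilon,\lambda,q})=\sum_b c_b\,\Lambda_q Z^b_{\delta,q}$, differentiate $I_\varepsilon$ in $(\lambda,q)$, use the orthogonality constraint to control the $\partial_\mu\tilde\Phi$ contribution, and show the resulting linear system in the $c_b$ has coefficient matrix asymptotically $\delta_{ab}/(\bar\lambda\varepsilon^{1/3})+O(1)$, hence invertible, forcing $c_b=0$. One small imprecision: $\partial_\mu\tilde\Phi$ is not literally in $\tilde K^\bot_{\delta,q}$ since that space varies with $(\lambda,q)$, but — as your parenthetical already indicates, and as the paper makes explicit by writing $\langle\langle\Lambda_q Z^a,\partial_{y_h}\tilde\Phi\rangle\rangle_g=\langle\langle\partial_{y_h}(\Lambda_q Z^a),\tilde\Phi\rangle\rangle_g=o(1)$ — differentiating the constraint $\langle\langle\tilde\Phi,\Lambda_q Z^b\rangle\rangle_g=0$ shows this term is negligible.
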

\begin{proof}
Set $q=q(y)=\psi_{\bar{q}}^{\partial}(y)$. Since $(\bar{\lambda},\bar{q})$
is a critical point for the $I_{\varepsilon}(\lambda,q)$ and since
$\tilde{\Phi}$ is a solution of (\ref{eq:Pibot}) we have, for $h=1,\dots,n-1$,
that there exists $c_{\varepsilon}^{a}\in\mathbb{R}$ such that

\begin{align*}
0= & \left.\frac{\partial}{\partial y_{h}}I_{\varepsilon}(\bar{\lambda},q(y))\right|_{y=0}\\
= & \langle\langle J'_{\varepsilon,g}(\tilde{W}_{\bar{\lambda}\varepsilon^{\frac{1}{3}},q(y)}
+\bar{\lambda}^{2}\varepsilon^{\frac{2}{3}}\tilde{V}_{\bar{\lambda}\varepsilon^{\frac{1}{3}},q(y)}
+\tilde{\Phi})\left.\frac{\partial}{\partial y_{h}}(\tilde{W}_{\bar{\lambda}\varepsilon^{\frac{1}{3}},q(y)}
+\bar{\lambda}^{2}\varepsilon^{\frac{2}{3}}\tilde{V}_{\bar{\lambda}\varepsilon^{\frac{1}{3}},q(y)}
+\tilde{\Phi})\rangle\rangle_{g}\right|_{y=0}\\
= & \sum_{a=1}^{n}c_{\varepsilon}^{a}\left.\langle\langle\Lambda_{q(y)}
Z_{\bar{\lambda}\varepsilon^{\frac{1}{3}},q(y)}^{a},\frac{\partial}{\partial y_{h}}
(\tilde{W}_{\bar{\lambda}\varepsilon^{\frac{1}{3}},q(y)}
+\bar{\lambda}^{2}\varepsilon^{\frac{2}{3}}
\tilde{V}_{\bar{\lambda}\varepsilon^{\frac{1}{3}},q(y)}+\tilde{\Phi})\rangle\rangle_{g}\right|_{y=0}\\
= & \sum_{a=1}^{n}c_{\varepsilon}^{a}\left.\langle\langle\Lambda_{q(y)}Z_{\bar{\lambda}\varepsilon^{\frac{1}{3}},q(y)}^{a},\frac{\partial}{\partial y_{h}}\tilde{W}_{\bar{\lambda}\varepsilon^{\frac{1}{3}},q(y)}\rangle\rangle_{g}\right|_{y=0}\\
 & +\varepsilon^{\frac{2}{3}}\bar{\lambda}^{2}\sum_{a=1}^{n}c_{\varepsilon}^{a}\left.\langle\langle\Lambda_{q(y)}Z_{\bar{\lambda}\varepsilon^{\frac{1}{3}},q(y)}^{a},\frac{\partial}{\partial y_{h}}\tilde{V}_{\bar{\lambda}\varepsilon^{\frac{1}{3}},q(y)}\rangle\rangle_{g}\right|_{y=0}\\
 & \sum_{a=1}^{n}c_{\varepsilon}^{a}\left.\langle\langle\frac{\partial}{\partial y_{h}}\left(\Lambda_{q(y)}Z_{\bar{\lambda}\varepsilon^{\frac{1}{3}},q(y)}^{a}\right),\tilde{\Phi}\rangle\rangle_{g}\right|_{y=0}
\end{align*}
using that
\[
\langle\langle\Lambda_{q(y)}Z_{\bar{\lambda}\varepsilon^{\frac{1}{3}},q(y)}^{a},\frac{\partial}{\partial y_{h}}\tilde{\Phi}\rangle\rangle_{g}=\langle\langle\frac{\partial}{\partial y_{h}}\left(\Lambda_{q(y)}Z_{\bar{\lambda}\varepsilon^{\frac{1}{3}},q(y)}^{a}\right),\tilde{\Phi}\rangle\rangle_{g}
\]
since $\tilde{\Phi}\in K_{\bar{\lambda}\varepsilon^{\frac{1}{3}},q(y)}^{\bot}$
for any $y$.

Arguing as in Lemma 6.1 and Lemma 6.2 of \cite{MP09} we have 
\begin{eqnarray*}
\left\Vert \frac{\partial}{\partial y_{h}}Z_{\bar{\lambda}\varepsilon^{\frac{1}{3}},q(y)}^{a}\right\Vert _{g}=O\left(\frac{1}{\bar{\lambda}\varepsilon^{\frac{1}{3}}}\right) &  & \left\Vert \frac{\partial}{\partial y_{h}}W_{\bar{\lambda}\varepsilon^{\frac{1}{3}},q(y)}\right\Vert _{g}=O\left(\frac{1}{\bar{\lambda}\varepsilon^{\frac{1}{3}}}\right)\\
\left\Vert \frac{\partial}{\partial y_{h}}V_{\bar{\lambda}\varepsilon^{\frac{1}{3}},q(y)}\right\Vert _{g}=O\left(\frac{1}{\bar{\lambda}\varepsilon^{\frac{1}{3}}}\right).
\end{eqnarray*}
For the first term we have 
\begin{align*}
\left.\langle\langle\Lambda_{q(y)}Z_{\bar{\lambda}\varepsilon^{\frac{1}{3}},q(y)}^{a},\frac{\partial}{\partial y_{h}}\tilde{W}_{\bar{\lambda}\varepsilon^{\frac{1}{3}},q}\rangle\rangle_{g}\right|_{y=0}= & \left.\langle\langle\Lambda_{q(y)}Z_{\bar{\lambda}\varepsilon^{\frac{1}{3}},q(y)}^{a},\Lambda_{q(y)}\frac{\partial}{\partial y_{h}}W_{\bar{\lambda}\varepsilon^{\frac{1}{3}},q}\rangle\rangle_{g}\right|_{y=0}\\
 & +\left.\langle\langle\Lambda_{q(y)}Z_{\bar{\lambda}\varepsilon^{\frac{1}{3}},q(y)}^{a},W_{\bar{\lambda}\varepsilon^{\frac{1}{3}},q}\frac{\partial}{\partial y_{h}}\Lambda_{q}\rangle\rangle_{g}\right|_{y=0}
\end{align*}
so we get 
\begin{multline*}
\left.\langle\langle\Lambda_{q(y)}Z_{\bar{\lambda}\varepsilon^{\frac{1}{3}},q(y)}^{a},\Lambda_{q(y)}\frac{\partial}{\partial y_{h}}W_{\bar{\lambda}\varepsilon^{\frac{1}{3}},q}\rangle\rangle_{g}\right|_{y=0}\\
=\frac{1}{\bar{\lambda}\varepsilon^{\frac{1}{3}}}\langle\langle\Lambda_{q}
Z_{\bar{\lambda}\varepsilon^{\frac{1}{3}},q}^{a},Z_{\bar{\lambda}\varepsilon^{\frac{1}{3}},q}^{h})\rangle\rangle_g
+o(1)=\frac{\delta_{ih}}{\bar{\lambda}\varepsilon^{\frac{1}{3}}}+o(1)
\end{multline*}
And, by change of variables, that 
\[
\left.\langle\langle\Lambda_{q(y)}Z_{\bar{\lambda}\varepsilon^{\frac{1}{3}},q(y)}^{a},W_{\bar{\lambda}\varepsilon^{\frac{1}{3}},q}\frac{\partial}{\partial y_{h}}\Lambda_{q}\rangle\rangle_{g}\right|_{y=0}=O(1).
\]
Similarly for the other terms we get 

\begin{align*}
\left.\langle\langle\Lambda_{q(y)}Z_{\bar{\lambda}\varepsilon^{\frac{1}{3}},q(y)}^{a},\frac{\partial}{\partial y_{h}}\tilde{V}_{\bar{\lambda}\varepsilon^{\frac{1}{3}},q(y)}\rangle\rangle_{g}\right|_{y=0} & \le\left\Vert \Lambda_{q(y)}Z_{\bar{\lambda}\varepsilon^{\frac{1}{3}},q(y)}^{a}\right\Vert _{g}\left\Vert \frac{\partial}{\partial y_{h}}\tilde{V}_{\bar{\lambda}\varepsilon^{\frac{1}{3}},q(y)}\right\Vert _{g}=O\left(\frac{1}{\bar{\lambda}\varepsilon^{\frac{1}{3}}}\right)\\
\left.\langle\langle\frac{\partial}{\partial y_{h}}\left(\Lambda_{q(y)}Z_{\bar{\lambda}\varepsilon^{\frac{1}{3}},q(y)}^{a}\right),\tilde{\Phi}\rangle\rangle_{g}\right|_{y=0} & \le\left\Vert \Lambda_{q(y)}Z_{\bar{\lambda}\varepsilon^{\frac{1}{3}},q(y)}^{a}\right\Vert _{g}\left\Vert \tilde{\Phi}\right\Vert _{g}=o(1).
\end{align*}
So we conclude that 
\[
0=\frac{1}{\lambda\varepsilon}\sum_{a=1}^{n}c_{\varepsilon}^{a}\left(\delta_{ih}+O(1)\right)
\]
which implies $c_{\varepsilon}^{a}=0$ for $a=1,\dots,n$.

Analogously we proceed for $\left.\frac{\partial}{\partial\lambda}I_{\varepsilon}(\lambda,\bar{q})\right|_{\lambda=\bar{\lambda}}$,
proving the claim.
\end{proof}
For the sake of completeness, we recall the definition of $C^{0}$-stable
critical point before proving Theorem \ref{thm:main}. 
\begin{defn}
Let $f:\mathbb{R}^{n}\rightarrow\mathbb{R}$ be a $C^{1}$ function
and let $K=\left\{ \xi\in\mathbb{R}^{n}\ :\ \nabla f(\xi)=0\right\} $.
We say that $\xi_{0}\in\mathbb{R}^{n}$ is a $C^{0}$-stable critical
point if $\xi_{0}\in K$ and there exist $\Omega$ neighborhood of
$\xi_{0}$ with $\partial\Omega\cap K=\emptyset$ and a $\eta>0$
such that for any $g:\mathbb{R}^{n}\rightarrow\mathbb{R}$ of class
$C^{1}$ with $\|g-f\|_{C^{0}(\bar{\Omega})}\le\eta$ we have a critical
point of $g$ near $\Omega$.
\end{defn}
We can complete now the proof of Theorem \ref{thm:main}. By Lemma
\ref{lem:punticritici} and by the definition of $C^{0}$-stable critical
point, we have to show that the function 
\[
G(\lambda,q):=\left[\lambda\gamma(q)B+\lambda^{4}\varphi(q)\right]
\]
where $B$ and $\varphi$ are defined in Lemma \ref{lem:espansione},
admits a $C^{0}$-stable critical point. We know that $B>0$ by computation,
and that $\gamma>0$ and $\varphi<0$ by the hypothesis of Th. \ref{thm:main}.
Thus, one can check that there exists $0<\alpha<\beta$ such that
any critical point $(\lambda,q)\in(0,+\infty)\times\partial M$ of
$G$ lies indeed in $(\alpha,\beta)\times\partial M$, because $\frac{\partial G}{\partial\lambda}=B\gamma(q)+4\lambda^{3}\varphi(q)$
and $\frac{\partial G}{\partial\lambda}(\lambda,q)=0$ if and only if
$\lambda^{3}=-\gamma(q)/\varphi(q)>0$. 

Moreover for any number $L<0$ there exists $\bar{\lambda}>0$ such
that $G(\lambda,q)<L$ for any $\lambda>\bar{\lambda}$ and $q\in\partial M$.
Thus there exists a maximum point $(\lambda_{0},q_{0})\in(\alpha,\beta)\times\partial M$
which is $C^{0}$-stable, and we can conclude the proof.
\begin{rem}
\label{rem:esempio}We give another example of function $\gamma(q)$
such that problem (\ref{eq:P}) admits a positive solution. Let $q_{0}\in\partial M$
be a maximum point for $\varphi$. This point exists since $\partial M$
is compact. Now choose $\gamma\in C^{2}(\partial M)$ such that $\gamma$
has a positive local maximum in $q_{0}$. Then the pair $(\lambda_{0},q_{0})=\left(-\sqrt[3]{\frac{B\gamma(q_{0})}{4\varphi(q_{0})}},q_{0}\right)$
is a $C^{0}$-stable critical point for $G(\lambda,q)$. 
\end{rem}
In fact, we have 
\[
\nabla_{\lambda,q}G=(B\gamma(q)+4\lambda^{3}\varphi(q),\lambda B\nabla_{q}\gamma(q)+\lambda^{4}\nabla_{q}\varphi(q))
\]
which vanishes for $(\lambda_{0},q_{0})=\left(-\sqrt[3]{\frac{B\gamma(q_{0})}{4\varphi(q_{0})}},q_{0}\right)$.
Moreover the Hessian matrix is 
\[
G_{\lambda,q}^{''}\left(\lambda_{0},q_{0}\right)=\left(\begin{array}{cc}
2\varphi(q_{0}) & 0\\
0 & \lambda_{0}\gamma''_{q}(q_{0})+\lambda_{0}^{4}\varphi''_{q}(q_{0})
\end{array}\right)
\]
which is negative definite. Thus $(\lambda_{0},q_{0})=\left(-\sqrt[3]{\frac{B\gamma(q_{0})}{4\varphi(q_{0})}},q_{0}\right)$
is a maximum, $C^{0}$-stable, point for $G(\lambda,q)$.

\section{Appendix}

Here we collect the proofs of the technical lemmas we claimed before.
\begin{proof}[Proof of Lemma \ref{lem:vq}.]
 We follow the strategy of \cite[Prop 5.1]{Al}. To prove the existence
of a solution of (\ref{eq:vqdef}) we have to show that the given
term $\left[\frac{1}{3}\bar{R}_{ijkl}(q)z_{k}z_{l}+R_{ninj}(q)t^{2}\right]\partial_{ij}^{2}U$
is $L_{2}(\partial\mathbb{R}_{+}^{n})$-orthogonal to the functions
$j_{1},\dots,j_{n}$. For $l=1,\dots,n-1$ we have 
\begin{multline*}
\int_{\mathbb{R}_{+}^{n}}\left[\frac{1}{3}\bar{R}_{ijkl}(q)z_{k}z_{l}+R_{ninj}(q)t^{2}\right]\partial_{ij}^{2}Uj_{b}\\
=\int_{\mathbb{R}_{+}^{n}}\left[\frac{1}{3}\bar{R}_{ijkl}(q)z_{k}z_{l}+R_{ninj}(q)t^{2}\right]\partial_{ij}^{2}U\partial_{l}Udzdt=0
\end{multline*}
by symmetry, since the integrand is odd with respect to the $z$ variables. 

For the last term, since when $i\neq j$ we have 
\[
\partial_{ij}U=\frac{n(n-2)z_{i}z_{j}}{\left((1+t)^{2}+|z|^{2}\right)^{\frac{n+2}{2}}}
\]
and since when $i=j$ we have $\bar{R}_{iikl}=0$ and, by (\ref{eq:Ricci}),
$R_{nini}=R_{nn}=0$ we have 
\begin{multline*}
\int_{\mathbb{R}_{+}^{n}}\left[\frac{1}{3}\bar{R}_{ijkl}(q)z_{k}z_{l}+R_{ninj}(q)t^{2}\right]\partial_{ij}^{2}UUdzdt\\
=\sum_{i\neq j}\sum_{k}\int_{\mathbb{R}_{+}^{n}}\left[\frac{1}{3}\bar{R}_{ijkl}(q)z_{k}z_{l}+R_{ninj}(q)t^{2}\right]\frac{n(n-2)z_{i}z_{j}}{\left((1+t)^{2}+|z|^{2}\right)^{n}}
\end{multline*}
and since $i\neq j$, by symmetry all the terms containing $t^{2}z_{i}z_{j}$
vanish and the others terms are non zero only when $i=k$ and $j=l$
or when $j=k$ and $i=l$, thus 
\[
\int_{\mathbb{R}_{+}^{n}}\left[\frac{1}{3}\bar{R}_{ijkl}(q)z_{k}z_{l}+R_{ninj}(q)t^{2}\right]\partial_{ij}^{2}UUdzdt
\]
\[
=\sum_{k}\int_{\mathbb{R}_{+}^{n}}\left[\frac{1}{3}\bar{R}_{klkl}(q)+\frac{1}{3}\bar{R}_{lkkl}(q)\right]\frac{n(n-2)z_{k}^{2}z_{l}^{2}}{\left((1+t)^{2}+|z|^{2}\right)^{-n}}=0
\]
since $\bar{R}_{klkl}(q)=-\bar{R}_{lkkl}(q)$. Moreover
\begin{multline*}
\int_{\mathbb{R}_{+}^{n}}\left[\frac{1}{3}\bar{R}_{ijkl}(q)z_{k}z_{l}+R_{ninj}(q)t^{2}\right]\partial_{ij}^{2}Uy_{b}\partial_{b}Udtz\\
=n(2-n)\sum_{i\ne j}\sum_{k,s}\int_{\mathbb{R}_{+}^{n}}\left[\frac{1}{3}\bar{R}_{ijkl}(q)z_{k}z_{l}+R_{ninj}(q)t^{2}\right]\frac{z_{i}z_{j}\left(z_{s}z_{s}+t(1+t)\right)}{\left((1+t)^{2}+|z|^{2}\right)^{-n-1}}\\
=n(2-n)\sum_{k}\int_{\mathbb{R}_{+}^{n}}\left[\frac{1}{3}\bar{R}_{klkl}(q)+\frac{1}{3}\bar{R}_{lkkl}(q)\right]\frac{z_{k}^{2}z_{l}^{2}\left(\sum_{s}z_{s}z_{s}+t(1+t)\right)}{\left((1+t)^{2}+|z|^{2}\right)^{-n-1}}=0.
\end{multline*}
Then there exists a solution. Also there exists a unique solution
$v_{q}$ which is $L_{2}(\partial\mathbb{R}_{+}^{n})$-orthogonal
to $j_{b}$ for $b=1,\cdots,n$. 

To prove the estimates (\ref{eq:Uvq}) and (\ref{new}) we use the
inversion $F:\mathbb{R}_{+}^{n}\rightarrow B^{n}\smallsetminus\left\{ (0,\dots,0-1)\right\} $
where $B^{n}\subset\mathbb{R}^{n}$ is the closed ball centered in
$(0,\dots,0,-1/2)$ and radius $1/2$. The explicit expression for
$F$ is 
\[
F(y_{1},\dots,y_{n})=\frac{(y_{1},\dots,y_{n-1},y_{n}+1)}{y_{1}^{2}+\dots+y_{n-1}^{2}+(y_{n}+1)^{2}}+(0,\dots,0-1).
\]
We set
\[
f_{q}(F(y))=\left[\frac{1}{3}\bar{R}_{ijkl}(q)y_{k}y_{l}+R_{ninj}(q)y_{n}^{2}\right]\partial_{ij}^{2}U(y)U^{-\frac{n+2}{n-2}}(y).
\]
By direct computation we have $|f_{i}(F(y))|\le C(1+|y|)^{4}$, so
we have 
\begin{equation}
|f_{q}(\xi)|\le C\left(1+\frac{1}{|\xi|}\right)^{4}\le C\frac{1}{\left(1+|\xi|\right)^{4}}\label{eq:stimafi}
\end{equation}
So it is possible to smoothly extend $f_{q}$ to the whole $B^{n}$,
and it turns out that if $v_{q}$ solves (\ref{eq:vqdef}), then \textbf{$\bar{v}_{q}:=(U^{-1}v_{q})\circ F^{-1}$
}solves 
\begin{equation}
\left\{ \begin{array}{ccc}
-\Delta\bar{v}=f_{q} &  & \text{on }B^{n}\\
\frac{\partial v}{\partial y_{n}}+2\bar{v}=0 &  & \text{on }\partial B^{n}
\end{array}\right..\label{eq:vbardef}
\end{equation}
Then existence and uniqueness of $\bar{v}_{q}$ are standard. To prove
the decadence estimates, fixed $w\in B^{n}$, consider the Green's
function $G(\xi,w)$ with boundary condition $\left(\frac{\partial}{\partial\nu}+2\right)G=0$.
Then by Green's formula and by (\ref{eq:vbardef}) we have 
\[
\bar{v}_{q}(\xi)=\int_{B^{n}}G(\xi,w)\Delta\bar{v}_{q}(\xi)+\int_{\partial B^{n}}\bar{v}_{q}\frac{\partial}{\partial\nu}G-G\frac{\partial}{\partial\nu}\bar{v}_{q}=-\int_{B^{n}}G(\xi,w)f_{q}(\xi)
\]
 and, in light of (\ref{eq:stimafi}) we have 
\[
|\bar{v}_{q}(\xi)|\le C\int_{B^{n}}|\xi-w|^{2-n}\left(1+|\xi|\right)^{-4}
\]
and by \cite[Prop 4.12 page 108]{Au} that $|\bar{v}_{q}(\xi)|\le C\left(1+|\xi|\right)^{-2}$
and by the definition of $\bar{v}_{q}$ we deduce 
\[
|v_{q}(y)|\le C\left(1+|y|\right)^{4-n}.
\]
The estimates on the first and the second derivatives of $v_{q}$
can be achieved in a similar way. 

It remains to prove (\ref{eq:Uvq}) and (\ref{new}). Notice that,
changing of variables and proceeding as at the beginning of this proof,
we have 
\[
\int_{B^{n}}f_{q}(\xi)d\xi=\int_{\mathbb{R}_{+}^{n}}\frac{\left[\frac{1}{3}\bar{R}_{ijkl}(q)y_{k}y_{l}+R_{ninj}(q)y_{n}^{2}\right]\partial_{ij}^{2}U(y)U^{-\frac{n+2}{n-2}}(y)}{(y_{1}^{2}+\dots+y_{n-1}^{2}+(y_{n}+1)^{2})^{n}}dy=0.
\]
So we have, using (\ref{eq:vbardef}) and integrating by parts, that
\begin{equation}
0=\int_{B^{n}}f_{q}=-\int_{B^{n}}\Delta\bar{v}_{q}=-\int_{\partial B^{n}}\frac{\partial}{\partial\nu}\bar{v}_{q}=-\int_{\partial B^{n}}2\bar{v}_{q}\label{eq:fq0}
\end{equation}
and, changing variables again, 
\begin{multline*}
0=\int_{\partial B^{n}}2\bar{v}_{q}(\xi)d\xi_{1}\dots d\xi_{n-1}=\int_{\partial\mathbb{R}_{+}^{n}}U^{-1}(y)v_{q}(y)U^{\frac{2(n-1)}{n-2}}(y)dy_{1}\dots dy_{n-1}\\
=\int_{\partial\mathbb{R}_{+}^{n}}U^{\frac{n}{n-2}}(y)v_{q}(y)dy_{1}\dots dy_{n-1}.
\end{multline*}
It is known (see \cite{Al}), that it holds, on $H^{1}(B^{n})$, 
\[
\inf_{\int_{\partial B_{n}}\phi=0}\frac{\int_{B^{n}}|\nabla\phi|^{2}}{\int_{\partial B^{n}}|\phi|^{2}}=2.
\]
Since, by (\ref{eq:fq0}), we know that $\int_{\partial B^{n}}\bar{v}_{q}=0$,
we get
\[
2\int_{\partial B^{n}}\bar{v}_{q}^{2}\le\int_{B^{n}}|\nabla\bar{v}_{q}|^{2},
\]
so, integrating by parts
\[
-\int_{B^{n}}\bar{v}_{q}\Delta\bar{v}_{q}=\int_{B^{n}}|\nabla\bar{v}_{q}|^{2}-2\int_{\partial B^{n}}\bar{v}_{q}^{2}\ge0.
\]
 By the properties of the inversion $F$ (see \cite[formula (5.10)]{Al})
we have also 
\[
-\int_{B^{n}}\bar{v}_{q}\Delta\bar{v}_{q}=-\int_{\mathbb{R}_{+}^{n}}v_{q}\Delta v_{q}.
\]
Finally, we want to prove that $v_{q}\in C^{2}(\partial M)$. Let
$q_{0}\in\partial M$. If $q\in\partial M$ is sufficiently close
to $q_{0}$, in Fermi coordinates we have $q=q(\eta)=\exp_{q_{0}}\eta$,
with $\eta\in\mathbb{R}^{n-1}$. So $v_{q}=v_{\exp_{q_{0}}\eta}$
and we define 
\[
\Gamma_{i}=\left.\frac{\partial}{\partial y_{i}}v_{\exp_{q_{0}}\eta}\right|_{\eta=0}.
\]
We prove the result for $\Gamma_{1}$, being the other cases completely
analogous. By (\ref{eq:vqdef}) we have that $\Gamma_{1}$ solves
\[
\left\{ \begin{array}{ccc}
-\Delta\Gamma_{1}=\left[\frac{1}{3}\left.\frac{\partial}{\partial\eta_{1}}\left(\bar{R}_{ijkl}(q(y))\right)\right|_{y=0}y_{k}y_{l}+\left.\frac{\partial}{\partial\eta_{1}}\left(\bar{R}_{ninj}(q(y))\right)\right|_{y=0}\right]\partial_{ij}^{2}U &  & \text{on }\mathbb{R}_{+}^{n};\\
\frac{\partial\Gamma_{1}}{\partial t}+nU^{\frac{2}{n-2}}\Gamma_{1}=0 &  & \text{on \ensuremath{\partial}}\mathbb{R}_{+}^{n}.
\end{array}\right.
\]
and, since $\frac{\partial R_{nn}}{\partial\eta_{i}}(q)=0$ (see \cite[Prop 3.2 (4)]{M1}),
we can proceed as at the beginning of this proof to show that $\Gamma_{1}$
exists. Analogously we get the claim for the second derivative. 

That concludes the proof.
\end{proof}
\begin{proof}[Proof of Lemma \ref{lem:JWpiuPhi}.]
 By (\ref{eq:Jlambda}) we estimate, for some $\theta\in(0,1)$ 
\begin{multline*}
\tilde{J}_{\varepsilon,\tilde{g}_{q}}(W_{\delta,q}+\delta^{2}V_{\delta,q}+\Phi)-\tilde{J}_{\varepsilon,\tilde{g}_{q}}(W_{\delta,q}+\delta^{2}V_{\delta,q})=\tilde{J}_{\varepsilon,\tilde{g}_{q}}'(W_{\delta,q}+\delta^{2}V_{\delta,q})[\Phi]\\
+\frac{1}{2}\tilde{J}_{\varepsilon,\tilde{g}_{q}}''(W_{\delta,q}+\delta^{2}V_{\delta,q}+\theta\Phi)[\Phi,\Phi]\\
=\int_{M}\left(\nabla_{\tilde{g}_{q}}W_{\delta,q}+\delta^{2}\nabla_{\tilde{g}_{q}}V_{\delta,q}\right)\nabla_{\tilde{g}_{q}}\Phi+\frac{n-2}{2(n-1)}R_{\tilde{g}_{q}}\left(W_{\delta,q}+\delta^{2}V_{\delta,q}\right)\Phi d\mu_{\tilde{g}_{q}}\\
+\int_{\partial M}\varepsilon\gamma\left(W_{\delta,q}+\delta^{2}V_{\delta,q}\right)\Phi d\sigma_{\tilde{g}_{q}}-(n-2)\int_{\partial M}\left(\left(W_{\delta,q}+\delta V_{\delta,q}\right)^{+}\right)^{\frac{n}{n-2}}\Phi d\sigma_{\tilde{g}_{q}}\\
+\frac{n-2}{2}\int_{\partial M}h_{\tilde{g}_{q}}\left(W_{\delta,q}+\delta^{2}V_{\delta,q}\right)\Phi d\sigma_{\tilde{g}_{q}}\\
+\frac{1}{2}\int_{M}|\nabla_{\tilde{g}_{q}}\Phi|^{2}+\frac{n-2}{4(n-1)}R_{\tilde{g}_{q}}\Phi^{2}d\mu_{\tilde{g}_{q}}+\frac{1}{2}\int_{\partial M}\varepsilon\gamma\Phi^{2}d\sigma_{\tilde{g}_{q}}\\
+\frac{n-2}{4}\int_{\partial M}h_{\tilde{g}_{q}}\Phi^{2}d\sigma_{\tilde{g}_{q}}-\frac{n}{2}\int_{\partial M}\left(\left(W_{\delta,q}+\delta V_{\delta,q}+\theta\Phi\right)^{+}\right)^{\frac{2}{n-2}}\Phi^{2}d\sigma_{\tilde{g}_{q}}.
\end{multline*}
Immediately we have, by Holder inequality,
\[
\int_{M}|\nabla_{\tilde{g}_{q}}\Phi|^{2}+\frac{n-2}{4(n-1)}R_{\tilde{g}_{q}}\Phi^{2}d\mu_{\tilde{g}_{q}}+\int_{\partial M}\left(\varepsilon\gamma+\frac{n-2}{4}h_{\tilde{g}_{q}}\right)\Phi^{2}d\sigma\le C\|\Phi\|_{\tilde{g}_{q}}^{2}=C\|\tilde{\Phi}\|_{g}^{2};
\]
\[
\int_{M}\frac{n-2}{2(n-1)}R_{\tilde{g}_{q}}W_{\delta,q}\Phi d\mu_{\tilde{g}_{q}}\le C\|W_{\delta,q}\|_{L^{\frac{2n}{n+2}}(M,\tilde{g}_{q})}\|\Phi\|_{L^{\frac{2n}{n-2}}(M,\tilde{g}_{q})}\le C\delta^{2}\|\tilde{\Phi}\|_{g};
\]
\[
\delta^{2}\int_{M}aV_{\delta,q}\Phi d\mu_{\tilde{g}_{q}}\le C\delta^{2}\|V_{\delta,q}\|_{L^{2}(M,\tilde{g}_{q})}\|\Phi\|_{L^{2}(M,\tilde{g}_{q})}\le C\delta^{2}\|\tilde{\Phi}\|_{g};
\]
\begin{align*}
\int_{\partial M}\varepsilon\tilde{\gamma}\left(W_{\delta,q}+\delta^{2}V_{\delta,q}\right)\Phi d\sigma_{\tilde{g}_{q}} & \le C\varepsilon\|W_{\delta,q}+\delta V_{\delta,q}\|_{L^{\frac{2(n-1)}{n}}(\partial M,\tilde{g}_{q})}\|\Phi\|_{L^{\frac{2(n-1)}{n-2}}(\partial M,\tilde{g}_{q})}\\
 & \le C\varepsilon\delta\|\tilde{\Phi}\|_{g}
\end{align*}
\begin{align*}
\int_{\partial M}\left(\left(W_{\delta,q}+\delta V_{\delta,q}+\theta\Phi\right)^{+}\right)^{\frac{2}{n-2}}\Phi^{2}d\sigma_{\tilde{g}_{q}} & \le C\left(\left\Vert W_{\delta,q}+\delta V_{\delta,q}+\theta\Phi\right\Vert _{L^{\frac{2(n-1)}{n-2}}(\partial M,\tilde{g}_{q})}^{\frac{2}{n-2}}\right)\|\Phi\|_{\tilde{g}_{q}}^{2}\\
 & \le C\|\tilde{\Phi}\|_{g}^{2};
\end{align*}
By integration by parts we have 
\begin{multline*}
\int_{M}\left(\nabla_{\tilde{g}_{q}}W_{\delta,q}+\delta^{2}\nabla_{\tilde{g}_{q}}V_{\delta,q}\right)\nabla_{\tilde{g}_{q}}\Phi d\mu_{\tilde{g}_{q}}=-\int_{M}\Delta_{\tilde{g}_{q}}\left(W_{\delta,q}+\delta^{2}V_{\delta,q}\right)\Phi d\mu_{\tilde{g}_{q}}\\
+\int_{\partial M}\left(\frac{\partial}{\partial\nu}W_{\delta,q}+\delta^{2}\frac{\partial}{\partial\nu}V_{\delta,q}\right)\Phi d\sigma_{\tilde{g}_{q}}.
\end{multline*}
and, as in (\ref{eq:deltaw+v}) we get 
\[
\int_{M}\Delta_{\tilde{g}_{q}}\left(W_{\delta,q}+\delta^{2}V_{\delta,q}\right)\Phi d\mu_{\tilde{g}_{q}}\le\|\Delta_{\tilde{g}_{q}}(W_{\delta,q}+\delta^{2}V_{\delta,q})\|_{L^{\frac{2n}{n+2}}(M,\tilde{g}_{q})}\|\Phi\|_{\tilde{g}_{q}}=O(\delta^{2})\|\tilde{\Phi}\|_{g}.
\]
Moreover, by Holder inequality, 
\[
\int_{\partial M}\delta^{2}\frac{\partial}{\partial\nu}V_{\delta,q}\Phi d\mu_{\tilde{g}_{q}}\le\delta^{2}\left\Vert \frac{\partial}{\partial\nu}V_{\delta,q}\right\Vert _{L^{\frac{2(n-1)}{n}}(\partial M,\tilde{g}_{q})}\|\Phi\|_{L^{\frac{2(n-1)}{n-2}}(\partial M,\tilde{g}_{q})}=O(\delta^{2})\|\tilde{\Phi}\|_{g}.
\]
Since $\partial M$ is umbilic, proceeding as in (\ref{eq:hresto1})
(\ref{eq:hresto2}), we get 
\[
\int_{\partial M}h_{\tilde{g}_{q}}(W_{\delta,q}+\delta^{2}V_{\delta,q})\Phi d\sigma_{\tilde{g}_{q}}=O(\delta^{4})\|\tilde{\Phi}\|_{g}.
\]
In the end we need to verify that 
\begin{multline*}
\int_{\partial M}\left[(n-2)\left(\left(W_{\delta,q}+\delta^{2}V_{\delta,q}\right)^{+}\right)^{\frac{n}{n-2}}-\frac{\partial}{\partial\nu}W_{\delta,q}\right]\Phi d\sigma_{\tilde{g}_{q}}\\
=\left\Vert (n-2)\left(\left(W_{\delta,q}+\delta^{2}V_{\delta,q}\right)^{+}\right)^{\frac{n}{n-2}}-\frac{\partial}{\partial\nu}W_{\delta,q}\right\Vert _{L^{\frac{2(n-1)}{n}}(\partial M,\tilde{g}_{q})}\|\Phi\|_{L^{\frac{2(n-1)}{n-2}}(\partial M,\tilde{g}_{q})}\\
=o(\delta^{2})\|\tilde{\Phi}\|_{g}
\end{multline*}
In fact, by (\ref{eq:Udelta}) and by taylor expansion we have 
\begin{multline*}
\int_{\partial M}\left[(n-2)\left(\left(W_{\delta,q}+\delta^{2}V_{\delta,q}\right)^{+}\right)^{\frac{n}{n-2}}-\frac{\partial}{\partial\nu}W_{\delta,q}\right]^{\frac{2(n-1)}{n}}d\sigma_{\tilde{g}_{q}}\\
\le\int_{\partial\mathbb{R}_{+}^{n}}\left[(n-2)\left(\left(U_{\delta}+\delta^{2}\left(v_{q}\right)_{\delta}\right)^{+}\right)^{\frac{n}{n-2}}+\frac{\partial}{\partial t}U_{\delta}\right]^{\frac{2(n-1)}{n}}dz+o(\delta^{\frac{4(n-1)}{n}})\\
\le\int_{\partial\mathbb{R}_{+}^{n}}\left[n\left(\left(U_{\delta}+\theta\delta^{2}\left(v_{q}\right)_{\delta}\right)^{+}\right)^{\frac{2}{n-2}}\delta^{2}\left(v_{q}\right)_{\delta}\right]^{\frac{2(n-1)}{n}}dz+o(\delta^{\frac{4(n-1)}{n}})=o(\delta^{\frac{4(n-1)}{n}}),
\end{multline*}
which concludes the proof. 
\end{proof}

\end{document}